\documentclass[preprint,12pt,3p,numbers,sort,compress]{elsarticle}

\providecommand{\pgfsyspdfmark}[3]{}
\usepackage{pgfplots}
\pgfplotsset{width=14cm,compat=1.9}
\usepackage{lineno,hyperref}
\modulolinenumbers[5]

\usepackage{amsfonts, amsthm, amssymb, amsmath}	
\usepackage{color}
\usepackage{graphicx}
\usepackage{mathrsfs}
\usepackage[english]{babel}
\usepackage{ upgreek }
\usepackage{ stmaryrd }
\usepackage{pgf,tikz}
\usetikzlibrary{arrows}

\usepackage{nicematrix}

\usepackage{nicefrac}

\usepackage{mathtools}

\journal{European Journal of Combinatorics}

\newtheorem{thm}{Theorem}[section]

\newtheorem{Theorem}[thm]{Theorem}
\newtheorem{Corollary}[thm]{Corollary}
\newtheorem{Proposition}[thm]{Proposition}
\newtheorem{Lemma}[thm]{Lemma}
\newtheorem{Example}[thm]{Example}

\newtheorem*{Theorem*}{Теорема}
\newtheorem*{Conjecture*}{Conjecture}
\newtheorem*{Corollary*}{Corollary}
\theoremstyle{definition}
\newtheorem{Definition}[thm]{Definition}

\newtheorem{Remark}[thm]{Remark}

\theoremstyle{definition}

\def\F{{\mathbb F}}
\def\A{{\cal A}}
\def\L{{\cal L}}

\def\SS{{\cal S}}
\newcommand{\FF}{\mathbb{F}}

\newcommand{\QQ}{\mathbb{Q}}
\newcommand{\ZZ}{\mathbb{Z}}
\newcommand{\NN}{\mathbb{N}}

\newcommand{\LIW}[1]{{\rm LW}_\prec({#1})}

 \newcounter{lst}

\begin{document}

\begin{frontmatter}
\title{Combinatorics on finite words and the length of a finite-dimensional associative algebra}


\author[HSE,MSU]{\texorpdfstring{M.A. Khrystik\corref{mycorrespondingauthor}}{}}
\cortext[mycorrespondingauthor]{Corresponding author}
\ead{good_michael@mail.ru}

\address[HSE]{HSE University, Faculty of Computer Science, Moscow, 101000, Russia.}
\address[MSU]{Moscow Center of Fundamental and Applied Mathematics, Moscow, 119991, Russia}

\begin{abstract}
Let $f_W(n)$ be the number of different factors of length $n$ appearing in $W$. A classical result of Morse and Hedlund, stated in 1938, asserts that an infinite word $W$ is ultimately periodic if and only if $f_W(n)\leq n$ for some $n\in \NN$. In this paper, we describe the form of finite words that satisfy the condition $f_W(n)\leq n$. We study relations between power avoidance and subword complexity of a finite word. We apply our combinatorial results to study the interrelations between various numerical invariants of finite-dimensional associative algebras.

\end{abstract}

\begin{keyword}
Subword complexity; Power avoidance; Finite words; Lengths of sets and algebras
\MSC[2020] primary 68R15; secondary 15A03
\end{keyword}

\end{frontmatter}

\section{Introduction} \label{section:introduction}

Intuitively, it is clear that the longer a word is, the more distinct factors (subwords) it contains. It is also evident that the lesser patterns a word avoids, the more distinct factors it possesses. In this work, we prove an inequality that quantifies these simple observations. To achieve this, we require a characterization of the structure of finite words, analogous to the description of ultimately periodic infinite words obtained in the classical work of Morse and Hedlund. The proven inequality turns out to be useful in the study of finite-dimensional associative algebras. We now state the algebraic problem that we study using combinatorics on words.

Let $\A$ be an associative, unital, and finite-dimensional algebra over a field $\F$, $\SS \subseteq \A$. We consider products of elements of $\SS$ as words over an alphabet $\SS$. Note that we consider $1_\A$ as an empty word (word of the length 0). Let $\L(\SS)$ denote the unital algebra generated by $\SS$ (i.\,e., $\F$-linear span of  the set of all words over $\SS$). Let $\SS^{\leq i}$ and $\SS^i$ denote the set of all words (products) over $\SS$ of length at most $i$ and exactly $i$, respectively, here $i\geq 0$. By $\L_i(\SS)$ we denote $\langle \SS^{\leq i}\rangle$ ($\F$-linear span of $\SS^{\leq i}$).

The {\em length of a subset} $\SS$ is the smallest integer $k$ such that $\L(\SS)=\L_k(\SS)$. We denote the length of a subset $\SS$ by $l(\SS)$. We call $\SS$ a {\it generating set} of $\A$ if $\L(\SS)=\A.$

\begin{Definition}\label{alg_len}
The {\em length} of an $\F$-algebra $\A$ is the number $$l(\A)=\max \{l(\SS) \mid \text{$\SS$ is a generating set of $\A$}\}.$$
\end{Definition}

In \cite{Paz}, Paz proves $l(M_n(\F))\leq \lceil (n^2+2)/3 \rceil$ (where $\lceil \cdot \rceil $ denotes the least integer function and $M_n(\F)$ denotes the algebra of $n \times n$ matrices over $\F$). In \cite{Pap}, Pappacena proves an upper bound for $l(\A)$ as a function of $\dim\A$ and $m(\A)$ (where $m(\A)$ is the largest degree of the minimal polynomial among all elements of the algebra $\A$). 

In both of the above works, the authors used counting the number of linearly independent words over $\SS$ to obtain estimates. In this work we pay more attention to the combinatorics on words. This allows us to obtain an upper bound for $l(\A)$, which, in particular, generalizes the above and some other bounds.

The paper is organized as follows. In Section \ref{section:comb_on_words}, we introduce some notations from the combinatorics on words. In Section \ref{section:MH_fin}, we prove a finite version of the Morse--Hedlund Theorem. Namely, we describe the form of finite words that satisfy the condition $f_W(n)\leq n$ for some $n$ (where $f_W(n)$ is the number of different factors of length $n$ appearing in $W$) in Theorem \ref{MH_fin}. Section \ref{section:MH_fin_gen} contains a generalization of Theorem \ref{MH_fin}. Namely, we describe the form of finite words that satisfy the condition $f_W(n)\leq m$ for some $n \in [m,l-m]$, where $W$ is a word of length $l$. Section \ref{section:c(W)} is devoted to relations between power avoidance and subword complexity of finite words. In Section \ref{sec:length}, we present some estimates for the length of an algebra that are known to date. In Section \ref{sec:bound}, we prove an improved general upper bound for the length of an algebra using combinatorial result obtained in Section \ref{section:c(W)}. Namely, we prove that the length of an algebra does not exceed the maximum of $\dim \A / (k+1)+k-1$ and $k(m(\A)-1)$ for an arbitrary natural number $k$ in Theorem \ref{thm:main}. Section \ref{sec:compare} is devoted to comparison of the obtained bound with previously known bounds. Theorems \ref{MH_fin}, \ref{c(w)_Q} and \ref{thm:main} are the main results of this paper.

\section{Combinatorics on words}\label{section:comb_on_words}

A {\em word} is a finite (or right-infinite) string of elements called letters. A finite word $U$ is called a {\em factor} (resp., a {\em suffix}) of $W$ if there exist (possibly empty) words $V_1$, $V_2$ (resp., $V$) such that $W=V_1UV_2$ (resp., $W=VU$).

The {\em subword complexity} (also called {\em factor complexity} or just {\em complexity}) of a word $W$ is the function $f_W(n)$ that maps $n$ to the number of distinct factors of length $n$ in $W$. Note that $f_W(0)=1$ for all $W$, since there is an empty factor $\varepsilon$.

A factor $B$ of a word $W$ is called {\em (right) special} if there exist two occurrences of $B$ in $W$  which are followed on the right by two distinct letters.

A factor $B$ of a word $W$ is called {\em repeated in $W$} if there are at least two occurrences of $B$ in $W$. In the opposite case, the factor $B$ is called {\em unrepeated in $W$}.

An infinite word $W=a_1a_2\dots$ is called {\em ultimately periodic} if there exist $i,p \in \NN$ such that $a_j=a_{j+p}$ for all $j\geq i$.

In the combinatorics on words there is the following classical result of Morse and Hedlund, stated in \cite{MH}.

\begin{Theorem}[{\cite{MH}}]
    An infinite word $W$ is ultimately periodic if and only if there exists $n$ such that $f_W(n)\leq n$.
\end{Theorem}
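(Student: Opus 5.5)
We prove the two directions separately; the forward one is a direct count, and the converse rests on the monotonicity of $f_W$.

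\emph{Ultimately periodic implies a small complexity value.} Suppose $a_j=a_{j+p}$ for all $j\geq i$. Then every factor of length $n$ starting at a position $j\geq i$ is determined by $j \bmod p$. So there are at most $p$ such factors. There are at most $i-1$ further factors, namely those starting at positions $1,\dots,i-1$. Hence $f_W(n)\leq p+i-1$ for every $n$, and taking $n=p+i-1$ gives $f_W(n)\leq n$.

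\emph{A small complexity value implies ultimate periodicity.} Assume $f_W(n)\leq n$ for some $n\geq 1$.

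First I show that $f_W$ is nondecreasing. Since $W$ is right-infinite, every factor of length $k$ extends on the right to at least one factor of length $k+1$. The map sending a factor of length $k+1$ to its length-$k$ prefix is therefore surjective, so $f_W(k+1)\geq f_W(k)$.

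Next I find a length where $f_W$ stalls. We have $f_W(0)=1$ and $f_W(n)\leq n$. If $f_W$ increased strictly at every step from $0$ to $n$, it would give $f_W(n)\geq n+1$, a contradiction. So there is some $k<n$ with $f_W(k)=f_W(k+1)$. For this $k$ the prefix map from length-$(k+1)$ factors to length-$k$ factors is a surjection between finite sets of equal size, hence a bijection. Consequently each factor $B$ of length $k$ has exactly one right extension. That is, every occurrence of $B$ in $W$ is followed by the same letter, so no factor of length $k$ is right special.

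The concluding step is the main one. Let $w_j$ be the factor of length $k$ starting at position $j$. Since no length-$k$ factor is right special, $w_j$ determines the letter $a_{j+k}$, and therefore determines $w_{j+1}$. So the sequence $(w_j)$ is generated by a deterministic map on a finite set of at most $n$ elements. By the pigeonhole principle, some value repeats: $w_i=w_{i+p}$ with $p\geq 1$. Determinism then gives $w_j=w_{j+p}$ for all $j\geq i$. Comparing first letters yields $a_j=a_{j+p}$ for all $j\geq i$, so $W$ is ultimately periodic.

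The only subtle point is the degenerate case $k=0$. Then $f_W(1)=f_W(0)=1$, so $W$ uses a single letter and is constant, hence periodic. The general argument above already covers this case, since the empty factor determines the next letter uniquely.
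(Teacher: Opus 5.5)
Your proof is correct and is the standard argument. The paper does not reprove this theorem (it is quoted from Morse and Hedlund), but it remarks that the classical proof rests on the two facts you use: monotonicity of $f_W$ for infinite words, and the fact that a right special factor of length $k$ forces $f_W(k+1)>f_W(k)$. Your closing pigeonhole-and-propagate step is also the same one that ends the paper's proof of Theorem~\ref{MH_fin}.

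The only cosmetic point is that for $k=0$ the phrase ``comparing first letters'' is vacuous, since each $w_j$ is empty. Comparing the determined letters instead, $a_{j+k}=a_{j+k+p}$ for all $j\geq i$, works uniformly for every $k$, and you handle the case $k=0$ directly anyway.
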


This theorem has various generalizations. For example, a two-dimensional analogue of the Morse--Hedlund Theorem is known as Nivat's conjecture (see, e.\,g., \cite{Nivat}, \cite{CyrKra}).

The works on the combinatorics on words deal with infinite words (see \cite{Lot} for a general overview and a large bibliography) and finite words (see, e.\,g., \cite{DeLuca, AnJu}). Note that one should not think that the finite case is trivial compared to the infinite case. For example, the very useful property $f_W(n+1)\geq f_W(n)$, which is obvious for infinite words, is not true for finite words. The presence of a special factor of length $n$ in an infinite word obviously guarantees that $f_W(n+1) > f_W(n)$. This is also not true for a finite word. Since the proof of the original Morse--Hedlund Theorem relies on these trivial facts, its finite analogue requires a more subtle approach.

\section{A finite version of the Morse--Hedlund Theorem}\label{section:MH_fin}

In this section we describe the form of finite words that satisfy the condition $f_W(n)\leq n$ for some $n$.

Let $W=a_1\dots a_p$ be a finite word and $d \in \QQ$ be represented by the fraction $q/p$, where $q \geq 0$. Define the finite word $W^d=b_1\dots b_q$ such that $b_i=a_j$ for $i \equiv j \pmod p$. For example, $(abc)^{\frac{6}{3}}=(abc)^2=abcabc$, $(abc)^{\frac{1}{3}}=a$, $(abc)^{\frac{7}{3}}=abcabca$, $(abc)^0=\varepsilon$.

\begin{Lemma}\label{MH_fin_lem}
    Let $W=a_1\dots a_q(a_{q+1}\dots a_{q+p})^{\frac{l-q-t}{p}} a_{l-t+1}\dots a_{l}$ be a finite word. Then $f_W(i) \leq q+p+t$ for all $i\in \NN$.
\end{Lemma}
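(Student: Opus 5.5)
We count factors of length $i$ by their starting positions. The key observation is that, apart from a few exceptional starts, any two starting positions differing by a multiple of $p$ give the same factor. We write $W=b_1\dots b_l$. The hypothesis says exactly that $b_j=b_{j+p}$ whenever $q+1\le j$ and $j+p\le l-t$. This holds because the middle block $(a_{q+1}\dots a_{q+p})^{\frac{l-q-t}{p}}$ occupies positions $q+1,\dots,l-t$ and is $p$-periodic there.

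Fix $i\ge 1$ and take a starting position $s$, with factor $b_s\dots b_{s+i-1}$ and $1\le s\le l-i+1$. We classify $s$ into three kinds.
\begin{itemize}
\item Starts $s\le q$: at most $q$ of them.
\item Starts $s\ge q+1$ with $s+i-1\le l-t$, i.e. the factor lies entirely inside the periodic block.
\item Starts $s\ge q+1$ with $s+i-1>l-t$, i.e. the factor reaches into the tail $a_{l-t+1}\dots a_l$.
\end{itemize}

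Factors of the second kind are determined by $s \bmod p$, by the periodicity relation. So they contribute at most $p$ distinct factors.

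For the third kind, the naive count fails, since the number of such $s$ can be large when $i$ is small. The idea is that a start $s$ of the third kind with $s-p\ge q+1$ can be replaced by $s-p$ whenever the whole window shifted by $p$ stays inside the periodic part. That happens only if the window avoids the tail, which is false here.

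So we reorganize the count globally. Instead of bounding each class separately, we show that the set of distinct factors of length $i$ is covered by the factors starting at the positions
\[
\{1,\dots,q\}\cup\{q+1,\dots,q+p\}\cup\{\text{the last } t \text{ starting positions } l-i-t+2,\dots,l-i+1\}.
\]
Take any start $s$ with $q+1\le s\le l-i-t+1$. Then the window ends at or before $l-t$, so it lies in the periodic part. Hence we may subtract $p$ repeatedly, keeping the window inside the periodic part, until $s\in[q+1,q+p]$. Thus every factor either starts among the first $q+p$ positions or among the last $t$ possible starting positions. This gives $f_W(i)\le q+p+t$. The edge cases need checking separately: $i$ so large that $l-i-t+1<q+1$, and lengths $l-q-t<p$. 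In both, the sets simply overlap or are empty, and the bound only improves.

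The main obstacle is organizing the count this way. The natural threefold split by the type of start overcounts, so one must cover by ``first $q+p$ starts plus last $t$ starts'' using backward shifts by $p$. After that, one only has to verify carefully that each shift keeps the window within positions $q+1,\dots,l-t$, where periodicity is valid.
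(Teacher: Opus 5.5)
Your final covering argument (the first $q$ starts, the last $t$ starts, and the middle starts $q+1\le s\le l-i-t+1$, whose factors lie inside the $p$-periodic block and so give at most $p$ distinct factors) is correct and is exactly the paper's proof. One correction: the obstacle you describe does not exist. A start $s\le l-i+1$ whose window reaches past position $l-t$ must satisfy $s\ge l-i-t+2$, so your third kind has at most $t$ starts, and the threefold split you called ``naive'' already gives $q+p+t$ directly.
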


\begin{proof}
    Among the first $q$ factors of length $i$ of the word $W$ there are no more than $q$ distinct factors, among the last $t$ factors of length $i$ of the word $W$ there are no more than $t$ distinct factors, among the remaining factors of length $i$ of the word $W$ there are no more than $p$ distinct factors. Therefore, $f_W(i)\leq q+t+p$.
\end{proof}

\begin{Example}
    Let $W=abbabbabaa$. Then from Lemma \ref{MH_fin_lem} it follows that $f_W(i) \leq 5$ for all $i\in \NN$, since $W$ can be represented in the following form: $W=(abb)^{\frac{8}{3}}aa$ (i.\,e., $q=0$, $p=3$, $t=2$).
\end{Example}

If $2n>l$, then $f_W(n)\leq n$ holds for any word of length $l$. Therefore, we will further assume that $2n\leq l$.

\begin{Theorem}\label{MH_fin}
    Let $W=a_1\dots a_l$, $1\leq n\leq l/2$. Then $f_W(n)\leq n$ if and only if $W$ can be represented in the following form:     
    $$W=a_1\dots a_q(a_{q+1}\dots a_{q+p})^{\frac{l-q-t}{p}} a_{l-t+1}\dots a_{l},$$
    where $q+p+t \leq n$.
\end{Theorem}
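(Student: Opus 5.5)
The ``if'' direction is Lemma~\ref{MH_fin_lem} with $i=n$. For the ``only if'' direction I would work with the sequence of length-$n$ factors $F_k=a_k\dots a_{k+n-1}$, $1\le k\le N:=l-n+1$. Since $n\le l/2$ we have $N\ge n+1>f_W(n)$, so some factor occurs at two positions. I would read the sequence $F_1,\dots,F_N$ as a walk in the Rauzy graph of order $n$: the vertices are the distinct length-$n$ factors, there are at most $n$ of them, and there is an edge $F_k\to F_{k+1}$ for each $k$.

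I would split the positions into three parts. Let $q$ be the largest index such that $F_1,\dots,F_q$ each occur exactly once in $W$. Symmetrically, let $t$ be the number of final positions $F_{N-t+1},\dots,F_N$ whose factors occur exactly once. These $q+t$ factors are pairwise distinct and differ from every factor at a middle position $k\in[q+1,N-t]$. Every middle factor is repeated, and the middle is nonempty because some factor repeats.

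The main step, and the main obstacle, is to show that the middle part is a single periodic run. I want a $p$ such that $a_k=a_{k+p}$ whenever both $k$ and $k+p$ lie in $[q+1,\,l-t]$, and such that the middle factors take exactly $p$ distinct values. The first thing I would try is this. Let $i=q+1$ and let $i+p$ be the next occurrence of $F_i$. From $F_i=F_{i+p}$, push the period to the right as far as it holds: $a_{k}=a_{k+p}$ up to some point $s$. If the run stops before the tail, then the factor of length $n-1$ ending just before the mismatch is right special. I would then show that such a break forces a new length-$n$ factor that has no counterpart in the periodic block $F_i,\dots,F_{i+p-1}$, and that is still repeated (since we are not yet in the tail). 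A repeated factor occurring beyond the cycle needs a second return, and this would push the total count of distinct factors to $q+t+p+(\text{extra})>n$, contradicting $f_W(n)\le n$. The precise counting needs care: in the Rauzy graph, a walk that leaves a cycle and later revisits a vertex off that cycle creates a second cycle, i.e.\ at least one more vertex than a single cycle plus tails. The inequality $N\ge n+1$ should make the numbers close. If this direct argument becomes messy, my fallback is to choose $p$ as the minimal distance between two equal middle factors and to argue by minimality that the middle walk never branches.

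Once periodicity is established, I would count. The factors $F_{q+1},\dots,F_{q+p}$ are distinct, by the choice of $p$ as the first return. The remaining middle factors repeat them. So $f_W(n)=q+p+t\le n$, which also gives $p\le n$. Since the middle factors have length $n\ge p$, the periodicity $a_k=a_{k+p}$ covers every letter from $a_{q+1}$ through $a_{l-t}$. Hence the middle segment equals $(a_{q+1}\dots a_{q+p})^{\frac{l-q-t}{p}}$, and $W$ has the required form.
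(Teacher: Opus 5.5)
Your ``if'' direction and the final bookkeeping are fine, but the ``only if'' direction has a genuine gap at the step you yourself flag as the main obstacle. The periodicity of the middle part is only sketched, and the sketch rests on a false premise and an unjustified count.

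First, it is not true that every middle factor is repeated. Take $W=ababac$ and $n=3$: here $f_W(3)=3$, your definitions give $q=0$ and $t=1$, and the middle factors are $aba,bab,aba$, where $bab$ occurs only once. So ``still repeated (since we are not yet in the tail)'' has no basis.

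Second, the contradiction $q+t+p+(\text{extra})>n$ does not follow. The hypothesis is only the upper bound $f_W(n)\le n$, and nothing you have established gives $q+p+t\ge n$, so exhibiting one additional vertex contradicts nothing. At length $n$ itself branching genuinely occurs: in the example, $ba$ is right special of length $n-1$, followed once by $b$ and once by $c$. You would therefore have to prove by an actual count that a break $a_x\ne a_{x+p}$ inside the middle is impossible. One way would be to show that the windows containing both positions $x$ and $x+p$ are pairwise distinct and not $p$-periodic, and then still deal with the windows after them. None of this is carried out, and your fallback (minimal distance between equal middle factors) does not by itself prevent branching.

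Third, the first return of $F_{q+1}$ only gives $F_{q+1}\ne F_{q+j}$ for $1<j\le p$. It does not give pairwise distinctness of $F_{q+1},\dots,F_{q+p}$, which you need to conclude $p\le n$ and $q+p+t\le n$.

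The paper avoids all of this by not working at length $n$ directly. It inducts on $n$. Suppose $f_W(k+1)\le k+1$ but $r=f_W(k)\ge k+1$. The proof deletes final letters one at a time while the length-$k$ suffix of the current prefix is unrepeated. Each deletion lowers both $f(k)$ and $f(k+1)$ by one. For the resulting prefix $W_t$, every length-$k$ factor extends to the right inside $W_t$, so $f_{W_t}(k+1)\ge f_{W_t}(k)$. The chain $k+1\le f_W(k)=f_{W_t}(k)+t\le f_{W_t}(k+1)+t=f_W(k+1)\le k+1$ then forces equality, hence $W_t$ has \emph{no} right special factors of length $k$. Consequently a single coincidence $A_{q+1}=A_{q+1+p}$ among the first $r-t+1$ length-$k$ windows of $W_t$ propagates automatically to the end of $W_t$. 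The bound $q+p\le r-t$ comes from the positions of those two windows, not from any distinctness claim. This reduction to a length at which the truncated word cannot branch is the idea your proposal is missing.
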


\begin{proof}
    {\em Sufficiency}. Suppose $W=a_1\dots a_q(a_{q+1}\dots a_{q+p})^{\frac{l-q-t}{p}} a_{l-t+1}\dots a_{l},$ where $q+p+t \leq n$. From Lemma \ref{MH_fin_lem} it follows that $f_W(n)\leq q+t+p \leq n$.

    {\em Necessity}. Suppose $f_W(n)\leq n$. We prove the statement by induction  on $n \leq l/2$. The base case $n = 1$ is true, since $f_W(1) \leq 1$ guarantees that $W=a_1^l$ (i.\,e., $q=0,p = 1, t=0$). It is sufficient to deduce the statement for a fixed $n=k+1 \leq l/2$ from the assumption that it holds for $n=k$.

    Let $f_W(k+1)\leq k+1$. If $f_W(k)\leq k$ the statement follows from the induction hypothesis. Assume that $f_W(k) \geq k+1$. Since each factor of length $k$ of the word $W$ except the suffix of length $k$ of the word $W$ can be extended from the right to a factor of length $k+1$ of the word $W$, the word $W$ has at least $f_W(k) - 1$ distinct factors of length $k+1$ (i.\,e., $f_W(k+1) \geq f_W(k) - 1$). Therefore,
        \begin{align} \label{formula:k<k+1}
        k\leq f_W(k)-1 \leq f_W(k+1) \leq k+1.
        \end{align}
            
    Denote $r=f_W(k)$, $W_i=a_1\dots a_{l-i}$. If the suffix of length $k$ of the word $W_i$ is unrepeated in $W_i$ for all $i\in [0,l-k]$, then all factors of length $k$ of the word $W$ are unrepeated in $W$ and $f_W(k)=l-k+1$. Using \eqref{formula:k<k+1}, we obtain $l=f_W(k)+k-1\leq k+2+k-1=2k+1$. However, from the condition $l\geq 2n$ it follows that $l\geq 2k+2$. Obtained contradiction shows that the following number is well-defined.  
    $$t=\min\{i\in [0,l-k] \mid \text{the suffix of length $k$ of the word $W_i$ is repeated in $W_i$}\}.$$

    It follows from the definition of $t$ that for all $i<t$ the suffix of length $k$ of the word $W_i$ is unrepeated in $W_i$ and $f_{W_{i+1}}(k)=f_{W_{i}}(k)-1$. Therefore,
        \begin{align} \label{formula:W_t(k)}
        f_{W_t}(k)=f_{W}(k)-t=r-t.
        \end{align}

    Let us show that $W_i$ is a word of length at least $k+1$ for all $i<t$. The word $W_i$ is a word of length $l-i$. From \eqref{formula:k<k+1} it follows that $r\leq k+2$. From \eqref{formula:W_t(k)} it follows that $t\leq r$. From the condition $l\geq 2n$ it follows that $l\geq 2k+2$. Since $i<t$, we have
        \begin{align} \label{formula:l-i}
        l-i>l-t\geq l-r\geq 2k+2-(k+2)=k.
        \end{align}

    Note that since the suffix of length $k$ of the word $W_i$ is unrepeated in $W_i$ and $W_i$ is a word of length at least $k+1$, then the suffix of length $k+1$ of the word $W_i$ is unrepeated in $W_i$ and $f_{W_{i+1}}(k+1)=f_{W_{i}}(k+1)-1$. Therefore,
        \begin{align} \label{formula:W_t(k+1)}
        f_{W_t}(k+1)=f_{W}(k+1)-t.
        \end{align}
        
    We can choose $r-t$ distinct factors $C_1,\dots,C_{r-t}$ of length $k$ of the word $W_t$ such that $C_i$ is not the suffix of $W_t$ for all $i\in [1,r-t]$, by the definition of $t$. $C_1,\dots,C_{r-t}$ can be extended from the right to distinct factors of length $k+1$ of the word $W$. Therefore,
        \begin{align} \label{formula:W_t(k+1)>W_t(k)}
        f_{W_t}(k+1)\geq f_{W_t}(k).
        \end{align}

    Using \eqref{formula:k<k+1}, \eqref{formula:W_t(k)}, \eqref{formula:W_t(k+1)}, \eqref{formula:W_t(k+1)>W_t(k)}, we obtain
        \begin{align} \label{formula:k+1<k+1}
        k+1\leq f_W(k) = f_{W_t}(k)+t \leq f_{W_t}(k+1)+t= f_W(k+1) \leq k+1.
        \end{align}

    From \eqref{formula:k+1<k+1} it follows that $f_W(k+1)=f_W(k)=k+1=r$ and $f_{W_t}(k+1)=f_{W_t}(k)=k+1-t=r-t$. Hence, there are no special factors of $W_t$ among $C_1,\dots,C_{r-t}$ (i.\,e., $W_t$ has no special factors of length $k$). Indeed, otherwise $f_{W_t}(k+1) > f_{W_t}(k)$.

    From \eqref{formula:l-i} it follows that $l-t\geq k$. Then $W_t$ is a word of length at least $k$ and $f_{W_t}(k)\geq 1$. Therefore, from \eqref{formula:W_t(k)} it follows that $r-t\geq 1$. 
    
    Let $A_1,\dots, A_{r-t+1}$ be the first $r-t+1 \geq 2$ factors of length $k$ of the word $W_t$. Let us show that they all exist (i.\,e., the length of $W_t$ is at least $(r-t+1)+(k-1)$). The word $W_t$ is a word of length $l-t$ and from \eqref{formula:l-i} it follows that $l\geq r+k$. Then the length of $W_t$ is at least $r+k-t=(r-t+1)+(k-1)$.

    Since $f_{W_t}(k)=r-t$, there are $2$ equal factors among $A_1,\dots, A_{r-t+1}$. Then there are $q\geq 0$ and $p \geq 1$ such that $A_{q+1}=A_{q+1+p}$, where $q+1+p\leq r-t+1$. Then $q+p\leq r-t$ and $a_{q+1}=a_{q+1+p}, \dots, a_{q+k}=a_{q+k+p}$. Since $W_t$ does not contain special factors of length $k$, the letter following the factor $A_{q+1}$ coincides with the letter following the factor $A_{q+1+p}$ (i.\,e., $a_{q+k+1} = a_{q+k+p+1}$). Repeating this argument for the subsequent letters of the word $W_t$, we obtain $a_i = a_{i+p}$ for all $i \in [q+1, l-t-p]$.

    Therefore,
    $$W_t=a_1\dots a_q(a_{q+1}\dots a_{q+p})^{\frac{l-q-t}{p}},$$   
    $$W=a_1\dots a_q(a_{q+1}\dots a_{q+p})^{\frac{l-q-t}{p}} a_{l-t+1}\dots a_{l}$$    
    and $q+p+t \leq r-t+t = r = k+1$. The induction step is finished.

\end{proof}

\begin{Corollary}\label{MH_fin_cor}
    Let $W=a_1\dots a_l$, $n\leq l/2$, and $f_W(n)\leq n$. Then $f_W(i)\leq n$ for all $i\in \NN$.
\end{Corollary}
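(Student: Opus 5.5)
This follows by combining the two results just proved. Since $W=a_1\dots a_l$ with $n\leq l/2$ and $f_W(n)\leq n$, the necessity part of Theorem \ref{MH_fin} gives a representation
$$W=a_1\dots a_q(a_{q+1}\dots a_{q+p})^{\frac{l-q-t}{p}} a_{l-t+1}\dots a_{l}$$
with $q+p+t\leq n$. This is exactly the form required by the hypothesis of Lemma \ref{MH_fin_lem}. That lemma gives $f_W(i)\leq q+p+t$ for every $i\in\NN$, and therefore $f_W(i)\leq n$ for all $i$.

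The only point to check is that Lemma \ref{MH_fin_lem} really covers every $i$, including $i>l$ and $i$ close to $l$. For such $i$ the lemma's counting is still valid, because it only bounds the number of windows of length $i$, and that number is at most $\max(l-i+1,0)$. So if $l-i+1$ is smaller than the number of windows counted in the lemma, the bound holds trivially.

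The case $i=0$ is not needed, since $\NN$ starts at $1$. Even if it were included, $f_W(0)=1\leq n$ holds because $n\geq 1$.

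There is no real obstacle here; all the work was done in Theorem \ref{MH_fin}.
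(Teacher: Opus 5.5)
Your proposal is correct and is the paper's own proof: necessity in Theorem~\ref{MH_fin} gives the representation with $q+p+t\leq n$, and Lemma~\ref{MH_fin_lem} then bounds $f_W(i)$ by $q+p+t\leq n$ for every $i$. Your extra checks for $i$ near or beyond $l$ and for $i=0$ are harmless. On $i=0$, note that $n\geq 1$ is automatic, since $f_W(0)=1$ rules out $n=0$.
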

\begin{proof}
    From Theorem \ref{MH_fin} it follows that $W=a_1\dots a_q(a_{q+1}\dots a_{q+p})^{\frac{l-q-t}{p}} a_{l-t+1}\dots a_{l},$ where $q+p+t \leq n$. From Lemma \ref{MH_fin_lem} it follows that $f_W(i)\leq q+t+p \leq n$.
\end{proof}

\section{Generalization of Theorem \ref{MH_fin}}\label{section:MH_fin_gen}

We first translate with our notations a result of Levé and Séébold \cite{Leve}.

\begin{Lemma}\cite[Proposition 3.3]{Leve}\label{f_W}
    Let $W$ be a word of length $l$, and $m$ the least integer such that $f_W(m+1)\leq f_W(m)$. Then the function $f_W(n)$ is
\begin{itemize}
    \item strictly increasing for $n$ between $0$ and $m$;
    \item constant (equal to $f_W(m)$) for $n$ between $m$ and $l-f_W(m)+1$;
    \item decreasing by one for $n$ between $l-f_W(m)+1$ and $l$.
\end{itemize}
    
\end{Lemma}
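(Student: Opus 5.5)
We prove the three claims in turn, using two elementary inequalities. Each factor of length $n$ other than the suffix of length $n$ extends to the right to a factor of length $n+1$, and each factor of length $n+1$ determines its prefix of length $n$. Hence
$$f_W(n)-1\le f_W(n+1)\le f_W(n)+s_n,$$
where $s_n$ is the number of right special factors of length $n$. A sharper form of the lower bound will also be used: if the suffix of length $n$ is repeated in $W$, then every factor of length $n$ extends, so $f_W(n+1)\ge f_W(n)$. Consequently, a drop $f_W(n+1)=f_W(n)-1$ occurs exactly when $s_n=0$ and the suffix of length $n$ is unrepeated.

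\emph{Strictly increasing part.} Since $m$ is the least integer with $f_W(m+1)\le f_W(m)$, we have $f_W(n+1)>f_W(n)$ for $n<m$. So the function is strictly increasing on $[0,m]$ by definition of $m$.

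\emph{Constant part.} The key step is monotonicity of special factors. If $B$ is right special, so is every suffix of $B$, so $s_{n+1}\le s_n$. In particular, if $s_n=0$ then $s_{n'}=0$ for all $n'\ge n$. Likewise, if the suffix of $W$ of length $n$ is unrepeated, the longer suffixes are unrepeated too.

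At $n=m$ we have $f_W(m+1)\le f_W(m)$. Either $s_m=0$, or $s_m\ge1$ and then necessarily the suffix of length $m$ is unrepeated and some factor is lost. I first want to show that in fact $s_m=0$ from this point on. I would argue that a suffix which is unrepeated loses exactly one factor, so $f_W(m+1)\ge f_W(m)-1+s_m$. Combined with $f_W(m+1)\le f_W(m)$ this forces $s_m\le1$.

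This edge case is the main obstacle. I would try to rule it out by comparing with step $m-1$, where the function strictly increased: that means $s_{m-1}\ge1$, or $s_{m-1}\ge 2$ if the suffix of length $m-1$ was unrepeated. Failing that, I would show directly that once the unrepeated suffix appears, any remaining special factor still yields the claimed shape, because the constancy interval is then empty.

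Once $s_n=0$ for all $n\ge m$, we get $f_W(n+1)=f_W(n)$ while the suffix of length $n$ is repeated, and $f_W(n+1)=f_W(n)-1$ afterwards. Thus the function is constant and then decreases by one, and by the monotonicity of unrepeated suffixes it never becomes constant again.

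\emph{Decreasing part and the breakpoint.} The breakpoint is located by the endpoint condition $f_W(l)=1$. If the function is constant $c=f_W(m)$ up to some $N$ and then decreases by one each step until $l$, then $c-(l-N)=1$, so $N=l-c+1$. This matches the statement.

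I would also check the consistency conditions $m\le l-f_W(m)+1$ and the case $m>l-f_W(m)+1$, where the constant interval is empty. I would handle these by noting that $f_W(n)\le l-n+1$ always holds.
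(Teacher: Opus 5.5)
Your handling of the case where a right special factor survives at step $m$ has a genuine gap. The paper quotes this lemma from Levé and Séébold without proof, so your argument has to be complete on its own, and this case is where the lemma's content lies.

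\textbf{The gap.} The situation you call an edge case cannot be ruled out. For $W=aaab$ one has $f_W(0),\dots,f_W(4)=1,2,2,2,1$, so $m=1$. Here $a$ and $aa$ are right special while the suffix $b$ is unrepeated, so $s_1=s_2=1$, and the whole constant stretch $[1,3]$ comes from this case. Consequences for your plan:
\begin{itemize}
\item Your intermediate goal ``$s_m=0$'' is false, and comparing with step $m-1$ cannot exclude the case.
\item Your fallback is also wrong: here $f_W(m+1)=f_W(m)$, so the constancy interval is not empty.
\end{itemize}
Let $K$ be the length of the shortest unrepeated suffix. For $n\ge K$,
$$f_W(n+1)-f_W(n)=\sum_v (e(v)-1)-1,$$
where the sum runs over factors $v$ of length $n$ other than the suffix and $e(v)$ is the number of letters that can follow $v$. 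What must be shown is that for every $n\ge m$ there is at most one right special factor of length $n$, and that it has exactly two extensions. Your inequalities only show that $f_W$ is non-decreasing between $K$ and the disappearance of right special factors, not that it is constant there. Separately, $s_{n+1}\le s_n$ is false: in $aabba$ one has $s_0=1$ and $s_1=2$. Only the consequence $s_n=0\Rightarrow s_{n'}=0$ for $n'\ge n$ holds, and that is all you actually use.

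\textbf{How to close it.} Induct on $n\ge m$. In your case the base holds at $n=m$: $K\le m$, and $f_W(m+1)\le f_W(m)$ forces the total excess $\sum_v(e(v)-1)$ to be exactly $1$. Now suppose the suffix $S$ of length $n$ is unrepeated and $u$ is the only right special factor of length $n$, with extensions $a\neq b$. Read $W$ as the sequence of its length-$n$ factors.
\begin{itemize}
\item Every factor other than $u$ and $S$ has a forced successor, and $S$ occurs only at the very end.
\item The forced path leaving $u$ through $a$ (resp.\ $b$) either returns to $u$ or stops at $S$.
\item Both exits are used and the walk ends at $S$, so exactly one exit, say $b$, leads to $S$. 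It is taken only at the last occurrence of $u$; all earlier occurrences are followed by $a$.
\item Hence every occurrence of $u$ except the first is entered along the same cycle, so it is preceded by the same letter $x$. The first occurrence is followed by $a$.
\end{itemize}
So $xu$ is the only candidate right special factor of length $n+1$, and its extensions lie in $\{a,b\}$. This propagates the invariant and gives $f_W(n+1)-f_W(n)\in\{0,-1\}$ for all $n\ge m$, with $0$ never following $-1$. Your breakpoint computation then finishes the proof.
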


The next theorem is a slight generalization of Theorem \ref{MH_fin}.

\begin{Theorem}\label{MH_fin_gen}
    Let $W=a_1\dots a_l$, $m \in \NN$, and $m\leq n\leq l-m$. Then $f_W(n)\leq m$ if and only if $W$ can be represented in the following form:     
    $$W=a_1\dots a_q(a_{q+1}\dots a_{q+p})^{\frac{l-q-t}{p}} a_{l-t+1}\dots a_{l},$$
    where $q+p+t \leq m$.
\end{Theorem}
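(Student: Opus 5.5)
Sufficiency is exactly as in Theorem \ref{MH_fin}: if $W$ has the stated form with $q+p+t\leq m$, then Lemma \ref{MH_fin_lem} gives $f_W(n)\leq q+p+t\leq m$ for every $n$.

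For necessity, suppose $f_W(n)\leq m$ for some $n\in[m,l-m]$. We reduce to Theorem \ref{MH_fin} with parameter $m$ in place of $n$. This requires $m\leq l/2$, which follows from $m\leq n\leq l-m$, and it requires showing $f_W(m)\leq m$. For this we use Lemma \ref{f_W}. Let $m_0$ be the least integer with $f_W(m_0+1)\leq f_W(m_0)$ and put $c=f_W(m_0)$. By the lemma, $f_W$ is strictly increasing on $[0,m_0]$, constant equal to $c$ on $[m_0,l-c+1]$, and decreasing by one on $[l-c+1,l]$.

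We locate $m$ relative to these regions. Since $f_W(0)=1$ and $f_W$ strictly increases on $[0,m_0]$, we have $f_W(j)\geq j+1$ for $j\leq m_0$. On the decreasing part, $f_W(j)=l-j+1$, since it drops by one per step down to $f_W(l)=1$. We distinguish three cases according to where $n$ lies.
\begin{itemize}
\item If $n\leq m_0$, then $f_W(n)\geq n+1>m$, a contradiction. So $n>m_0$.
\item If $n\geq l-c+1$, then $f_W(n)=l-n+1\geq m+1$, since $n\leq l-m$. This is again a contradiction.
\item Hence $n$ lies in the constant region $[m_0,l-c+1]$, so $c=f_W(n)\leq m$.
\end{itemize}

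Now consider $f_W(m)$. If $m\leq m_0$, then $f_W(m)\leq f_W(n)\leq m$ because $f_W$ is increasing up to $m_0$ and constant afterward, with $n\geq m_0$. If $m_0\leq m\leq n$, then $m$ lies in the constant region (as $m\leq n\leq l-c+1$), so $f_W(m)=c\leq m$. In either case $f_W(m)\leq m$, and Theorem \ref{MH_fin} applied with $n=m$ yields the representation with $q+p+t\leq m$.

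The main obstacle is applying Lemma \ref{f_W} correctly at the boundaries. One must confirm that the decreasing region really has $f_W(j)=l-j+1$, and handle degenerate cases: $m_0$ may not exist before $l$, or the constant region may be empty. We will check these by noting $f_W(l)=1$ and $f_W(l+1)=0\leq f_W(l)$, so $m_0\leq l$ always exists. If the lemma's endpoints prove awkward, a fallback is to argue directly, as in the proof of Theorem \ref{MH_fin}, that $f_W(j+1)\geq f_W(j)-1$ and that $f_W$ is nondecreasing up to the first non-increase, and derive $f_W(m)\leq m$ from that.
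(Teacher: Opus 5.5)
Your proposal is correct and follows essentially the paper's route: you reduce necessity to showing $f_W(m)\leq m$ via Lemma \ref{f_W}, then apply Theorem \ref{MH_fin} with $n=m$, which is valid since $2m\leq l$. The only difference is bookkeeping: the paper argues by contradiction that $f_W(m)>f_W(n)$ would force the decreasing regime and hence $1=f_W(l)\leq m+n-l\leq 0$, whereas you place $n$ directly in the constant region by ruling out the strictly increasing and decreasing parts.
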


\begin{proof}
    {\em Sufficiency}. Suppose $W=a_1\dots a_q(a_{q+1}\dots a_{q+p})^{\frac{l-q-t}{p}} a_{l-t+1}\dots a_{l},$ where $q+p+t \leq m$. From Lemma \ref{MH_fin_lem} it follows that $f_W(n)\leq q+t+p \leq m$.

     {\em Necessity}. Suppose $f_W(n)\leq m$. Our goal is to show that $f_W(m)\leq m$. Assume the contrary, i.\,e., $f_W(m) > m \geq f_W(n)$. Then from Lemma \ref{f_W} it follows that $f_W(n+i)\leq m-i$ for all $i \in [0,l-n]$. Hence $1=f_W(l)\leq m+n-l \leq 0$. A contradiction is obtained.

     Since $f_W(m)\leq m$ and $m\leq l/2$, from Theorem \ref{MH_fin} it follows that 
     $$W=a_1\dots a_q(a_{q+1}\dots a_{q+p})^{\frac{l-q-t}{p}} a_{l-t+1}\dots a_{l},$$
     where $q+p+t \leq m$.
\end{proof}

\begin{Corollary}\label{cor_gen}
    Let $W=a_1\dots a_l$, $m \in \NN$, $m\leq n\leq l-m$, and $f_W(n)\leq m$. Then $f_W(i)\leq m$ for all $i\in \NN$.
\end{Corollary}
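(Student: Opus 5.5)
The plan is to copy the proof of Corollary \ref{MH_fin_cor}, with Theorem \ref{MH_fin_gen} playing the role that Theorem \ref{MH_fin} played there. Assume $W=a_1\dots a_l$, $m\in\NN$, $m\leq n\leq l-m$ and $f_W(n)\leq m$. These are exactly the hypotheses of Theorem \ref{MH_fin_gen}, so its necessity part gives a representation
$$W=a_1\dots a_q(a_{q+1}\dots a_{q+p})^{\frac{l-q-t}{p}} a_{l-t+1}\dots a_{l},$$
with $q\geq 0$, $p\geq 1$, $t\geq 0$ and $q+p+t\leq m$.

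Next, I apply Lemma \ref{MH_fin_lem} to this representation. The lemma bounds the complexity uniformly in the length of the factors: $f_W(i)\leq q+p+t$ for every $i\in\NN$. Combining this with $q+p+t\leq m$ gives $f_W(i)\leq m$ for all $i$, which is the claim.

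No step here is a real obstacle, since all the work has already been done in Theorem \ref{MH_fin_gen}. The only thing to check is that the hypotheses match that theorem's necessity direction. In particular, $m\leq n\leq l-m$ forces $m\leq l/2$, which is what the theorem uses when it passes through Theorem \ref{MH_fin}; this is already built into its statement, so nothing new has to be verified. The indices $i>l$ do not need separate treatment either, because the bound of Lemma \ref{MH_fin_lem} holds for every $i\in\NN$, and in any case $f_W(i)=0$ for $i>l$.
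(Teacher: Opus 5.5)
Your proof is correct and matches the paper's argument exactly: you use Theorem \ref{MH_fin_gen} to get the representation with $q+p+t\leq m$, then Lemma \ref{MH_fin_lem} gives $f_W(i)\leq q+p+t\leq m$ for every $i$.
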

\begin{proof}
    From Theorem \ref{MH_fin_gen} it follows that $W=a_1\dots a_q(a_{q+1}\dots a_{q+p})^{\frac{l-q-t}{p}} a_{l-t+1}\dots a_{l},$ where $q+p+t \leq m$. From Lemma \ref{MH_fin_lem} it follows that $f_W(i)\leq q+t+p \leq m$.
\end{proof}

Note that Theorem \ref{MH_fin} is a special case of Theorem \ref{MH_fin_gen} with $m=n$.

\section{Total complexity and power avoidance}\label{section:c(W)}

A word $W$ is a {\em $d$-power} if $W=U^d$ for a finite word $U$ and some $d\in \QQ$. A word $W$ {\em avoids} $d$-powers (or {\em $d$-power-free}) if it has no factors that are $\alpha$-powers for $\alpha \geq d \geq 1$. A word $W$ {\em avoids} $d^+$-powers (or {\em $d^+$-power-free}) if it has no factors that are $\alpha$-powers for $\alpha > d \geq 1$. For example, $W=abcdbcdef$ avoids $2^+$-powers, but does not avoid $2$-powers, since $W$ contains the factor $(bcd)^2$. Note that $W$ avoids $1^+$-powers if and only if $W$ consists of pairwise distinct letters.

Power avoidance, subword complexity and relations between them are major themes in combinatorics on words (see, e.\,g., \cite{ShSh} and references therein). In this section we consider relations between power avoidance and subword complexity of a finite word.


\begin{Lemma}\label{lem_0,k}
    Let $W$ be a word of length $l$, $d\in \QQ$, $W$ avoid $d^+$-powers, $k\in \NN$, $k \leq l/2$, $l>kd$. Then $f_W(n)\geq n+1$ for all $n \in [0,k]$.
\end{Lemma}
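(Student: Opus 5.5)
We argue by contradiction. Suppose $f_W(n)\leq n$ for some $n\in[0,k]$. The case $n=0$ is impossible, since $f_W(0)=1$. So $1\leq n\leq k\leq l/2$, and Theorem \ref{MH_fin} applies. It gives
$$W=a_1\dots a_q(a_{q+1}\dots a_{q+p})^{\frac{l-q-t}{p}} a_{l-t+1}\dots a_{l},\qquad q+p+t\leq n\leq k.$$
The middle block $U^{\frac{l-q-t}{p}}$, with $U=a_{q+1}\dots a_{q+p}$ and $p\geq 1$, is a factor of $W$. It is an $\alpha$-power with
$$\alpha=\frac{l-q-t}{p}.$$
We will show $\alpha>d$, which contradicts the assumption that $W$ avoids $d^+$-powers.

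The key estimate uses only $q+t\leq k-p$ and $l>kd$:
$$\alpha=\frac{l-q-t}{p}\geq\frac{l-k+p}{p}=1+\frac{l-k}{p}.$$
The right-hand side is decreasing in $p$ and $p\leq k$, so
$$\alpha\geq 1+\frac{l-k}{k}=\frac{l}{k}>d.$$
This holds directly when $p\leq k$, and in fact $p\leq n-q-t\leq k$ always.

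Any factor of $W$ that is an $\alpha$-power with $\alpha>d$ violates $d^+$-power-freeness, so the contradiction is reached. Here $d\geq 1$ is implicit in the paper's definition of avoidance.

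The one point needing care is that the exponent of the periodic block is computed correctly even when it is a fraction: the block has length $l-q-t$ and period $p$, so it is genuinely an $\frac{l-q-t}{p}$-power in the sense defined in the paper. The rest is the routine inequality chain above, where the hypothesis $k\leq l/2$ serves only to make Theorem \ref{MH_fin} applicable.
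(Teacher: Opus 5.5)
Your proof is correct and follows essentially the same route as the paper: you rule out $n=0$, invoke Theorem~\ref{MH_fin} to obtain the decomposition with $q+p+t\le n\le k$, and show that the periodic middle block has exponent exceeding $d$. The only cosmetic difference is that you bound the exponent directly from below, $\frac{l-q-t}{p}\ge 1+\frac{l-k}{p}\ge \frac{l}{k}>d$ (using $l\ge k$), whereas the paper assumes $\frac{l-q-t}{p}\le d$ and derives $l\le nd-(q+t)(d-1)\le kd$ (using $d\ge 1$).
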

\begin{proof}
    Assume the contrary, i.\,e., $f_W(n) \leq n$ for some $n \in [0,k]$. Since $f_W(0)=1$, $1\leq n\leq k \leq l/2$. From Theorem \ref{MH_fin} it follows that
    $$W=a_1\dots a_q(a_{q+1}\dots a_{q+p})^{\frac{l-q-t}{p}} a_{l-t+1}\dots a_{l},$$
    where $q+p+t\leq n$. Since $W$ avoids $d^+$-powers, $(l-q-t)/p \leq d$. Therefore,     
    $$l-q-t \leq pd \leq (n-q-t)d,$$     
    $$l \leq nd-(q+t)(d-1)\leq nd \leq kd.$$
    However, $l>kd$. A contradiction is obtained.
\end{proof}

\begin{Lemma}\label{lem_k,l-k}
    Let $W$ be a word of length $l$, $d\in \QQ$, $W$ avoid $d^+$-powers, $k\in \NN$, $k \leq l/2$, $l>kd$. Then $f_W(n)\geq k+1$ for all $n \in [k,l-k]$.
\end{Lemma}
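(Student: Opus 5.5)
We argue by contradiction, reducing to Theorem \ref{MH_fin_gen} with $m=k$ in the same way that Lemma \ref{lem_0,k} reduced to Theorem \ref{MH_fin}. Suppose $f_W(n)\leq k$ for some $n\in[k,l-k]$. The hypotheses of Theorem \ref{MH_fin_gen} then hold with $m=k$: we have $k\in\NN$, $k\leq n\leq l-k$ (so in particular $k\leq l/2$), and $f_W(n)\leq k$. Hence $W$ can be written as
$$W=a_1\dots a_q(a_{q+1}\dots a_{q+p})^{\frac{l-q-t}{p}} a_{l-t+1}\dots a_{l},$$
with $q+p+t\leq k$ and $p\geq 1$.

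The next step is to use power avoidance. The middle block $(a_{q+1}\dots a_{q+p})^{\frac{l-q-t}{p}}$ is a factor of $W$ and an $\alpha$-power with $\alpha=(l-q-t)/p$. Since $W$ avoids $d^+$-powers, $\alpha\leq d$, that is, $l-q-t\leq pd$. Using $p\leq k-q-t$ gives
$$l\leq (k-q-t)d+q+t = kd-(q+t)(d-1)\leq kd,$$
because $d\geq 1$ by the definition of $d^+$-avoidance. This contradicts $l>kd$, so $f_W(n)\geq k+1$ for every $n\in[k,l-k]$.

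There is one edge case to check. If $l-q-t<p$, the middle block has exponent less than $1$. The inequality $l-q-t\leq pd$ still holds in that case, since $l-q-t<p\leq pd$, so the argument is unaffected. Given the earlier results, the only real point is recognising that Theorem \ref{MH_fin_gen} applies with $m=k$. The rest is the same arithmetic as in Lemma \ref{lem_0,k}, with $k$ in place of $n$.
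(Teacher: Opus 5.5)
Your proof is correct and is essentially the paper's argument: the paper applies Corollary \ref{cor_gen} with $m=k$ to get $f_W(k)\le k$ and then contradicts Lemma \ref{lem_0,k}, while you use the decomposition from Theorem \ref{MH_fin_gen} directly and repeat the power-avoidance arithmetic of Lemma \ref{lem_0,k}, which skips the detour through $f_W(k)$. Your edge case $l-q-t<p$ never arises, since $q+p+t\le k\le l/2$ gives $l-q-t\ge l-k+p\ge p$, but treating it does no harm.
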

\begin{proof}
    Assume the contrary, i.\,e., $f_W(n) \leq k$ for some $n \in [k,l-k]$. Now we apply Corollary \ref{cor_gen} for $m=k$. We get $f_W(k) \leq k$. However, Lemma \ref{lem_0,k} guarantees that $f_W(k) \geq k+1$. A contradiction is obtained.   
\end{proof}

\begin{Lemma}\label{lem_l-k,l}
    Let $W$ be a word of length $l$, $d\in \QQ$, $W$ avoid $d^+$-powers, $k\in \NN$, $k \leq l/2$, $l>kd$. Then $f_W(n) = l-n+1$ for all $n \in [l-k,l]$.
\end{Lemma}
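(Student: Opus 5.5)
We want to prove $f_W(n)=l-n+1$ for $n\in[l-k,l]$, i.e., all factors of length $n\geq l-k$ are pairwise distinct. The upper bound $f_W(n)\le l-n+1$ always holds, since a word of length $l$ has exactly $l-n+1$ occurrences of factors of length $n$. So the whole content is that no factor of length $n$ occurs twice in $W$. Since an occurrence of a long factor twice forces a repeated shorter factor, it suffices to treat $n=l-k$: if all factors of length $l-k$ are unrepeated, then so are all longer factors, because a repeated factor of length $n\ge l-k$ would contain a repeated prefix of length $l-k$.

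\textbf{Direct argument.} Suppose a factor $U$ of length $n\geq l-k$ occurs at two distinct positions $i<j$ in $W$. Set $p=j-i\geq 1$. Two overlapping or adjacent occurrences of $U$ give the factor $a_i\dots a_{j+n-1}$ of length $n+p$ with period $p$. Because both occurrences fit in $W$, we have $j+n-1\le l$, hence $p\le l-n\le k$. The factor $a_i\dots a_{j+n-1}$ is then an $\alpha$-power with $\alpha=(n+p)/p=1+n/p$. Using $n\ge l-k$ and $p\le k$, we get $\alpha\geq 1+(l-k)/k=l/k>d$, where the last inequality is the hypothesis $l>kd$. This contradicts $d^+$-power avoidance.

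Two details need care. First, the period-$p$ factor must be well defined: when $p\le n$ the two occurrences overlap and periodicity follows from $a_{i+s}=a_{j+s}$ for $s\in[0,n-1]$. The case $p>n$ cannot occur, since $p\le l-n\le k\le l/2\le l-k\le n$. Second, the word $a_i\dots a_{j+n-1}$ is indeed $(a_i\dots a_{i+p-1})^{(n+p)/p}$ in the paper's notation.

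\textbf{Main obstacle.} The only delicate point is the inequality chain showing $\alpha>d$ and $p\le n$. Both use $k\le l/2$ together with $l>kd$. No earlier lemma is needed beyond this elementary periodicity observation, although one could alternatively phrase the argument via Lemma~\ref{f_W} using Lemma~\ref{lem_k,l-k}.
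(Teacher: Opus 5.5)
Your proof is correct, and it takes a genuinely different, more elementary route than the paper.

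The paper assumes $f_W(n)\le l-n$ and applies Theorem~\ref{MH_fin_gen} with $m=l-n$, which is admissible because $l-n\le k\le n$. This gives $W=a_1\dots a_q(a_{q+1}\dots a_{q+p})^{\frac{l-q-t}{p}}a_{l-t+1}\dots a_l$ with $q+p+t\le l-n$. Power avoidance on the periodic middle block then yields $l\le (l-n)d-(q+t)(d-1)\le kd$. That argument depends on everything behind Theorem~\ref{MH_fin_gen}, namely Theorem~\ref{MH_fin} and the L\'ev\'e--S\'e\'ebold Lemma~\ref{f_W}. In exchange, Lemmas~\ref{lem_0,k}, \ref{lem_k,l-k} and \ref{lem_l-k,l} all become consequences of one structural result.

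You instead observe that in this top range $f_W(n)<l-n+1$ simply means some factor of length $n$ is repeated. A single repeat at shift $p\le l-n\le k$ already produces the factor $(a_i\dots a_{i+p-1})^{(n+p)/p}$, whose exponent is $1+n/p\ge l/k>d$. This is self-contained and slightly stronger than needed. The relations $a_x=a_{x+p}$ for $x\in[i,i+n-1]$ give period $p$ whether or not $p\le n$, so you never actually need $k\le l/2$; note that $l>kd\ge k$ still guarantees $l-k\ge 1$.

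The shortcut is special to this range, though. In Lemma~\ref{lem_0,k} a repeated short factor can occur at a large shift, and there the global description from Theorem~\ref{MH_fin} is really needed to find a long periodic block. Your preliminary reduction to $n=l-k$ is harmless but redundant, since the direct argument handles every $n\in[l-k,l]$ at once.
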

\begin{proof}
    It is obvious that $f_W(n) \leq l-n+1$ and $f_W(l) = 1$. Assume that $f_W(n) \leq l-n$ for some $n \in [l-k,l-1]$. Since $k \leq l/2$ and $n\geq l-k$, $l-n \leq k \leq n$. Now we apply Theorem \ref{MH_fin_gen} for $m=l-n$. We get
    $$W=a_1\dots a_q(a_{q+1}\dots a_{q+p})^{\frac{l-q-t}{p}} a_{l-t+1}\dots a_{l},$$
    where $q+p+t\leq l-n$. Since $W$ avoids $d^+$-powers, $(l-q-t)/p \leq d$. Therefore,     
    $$l-q-t \leq pd \leq (l-n-q-t)d,$$     
    $$l \leq (l-n)d-(q+t)(d-1)\leq (l-n)d \leq kd.$$
    However, $l>kd$. A contradiction is obtained. 
\end{proof}

The {\em total complexity} of a word $W=a_1\dots a_l$ is the number $c(W) = \sum_{i=0}^l f_W(i)$.

\begin{Theorem}\label{c(w)_Q}
    Let $W$ be a word of length $l$, $d\in \QQ$, $W$ avoid $d^+$-powers, $k\in \NN$, $k \leq l/2$, $l>kd$. Then $c(W)\geq (k+1)(l-k+1)$.
\end{Theorem}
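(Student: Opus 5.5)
The plan is to split the sum $c(W)=\sum_{i=0}^l f_W(i)$ into three ranges, $[0,k-1]$, $[k,l-k]$ and $[l-k+1,l]$, and bound each piece by one of Lemmas \ref{lem_0,k}, \ref{lem_k,l-k}, \ref{lem_l-k,l}. These ranges are disjoint and cover $[0,l]$, because $k\leq l/2$ gives $k-1<k\leq l-k$. All three lemmas have exactly the hypotheses of the theorem, so each applies directly.

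The three bounds are as follows.

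\begin{itemize}
\item For $n\in[0,k-1]$, Lemma \ref{lem_0,k} gives $f_W(n)\geq n+1$. This range contributes at least $\sum_{n=0}^{k-1}(n+1)=k(k+1)/2$.
\item For $n\in[k,l-k]$, Lemma \ref{lem_k,l-k} gives $f_W(n)\geq k+1$. There are $l-2k+1$ such indices, so this range contributes at least $(k+1)(l-2k+1)$.
\item For $n\in[l-k+1,l]$, Lemma \ref{lem_l-k,l} gives $f_W(n)=l-n+1$. These values run over $k,k-1,\dots,1$, so this range contributes $k(k+1)/2$.
\end{itemize}

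Adding the three contributions gives
$$c(W)\geq k(k+1)+(k+1)(l-2k+1)=(k+1)(l-k+1).$$

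There is no real obstacle: all the difficulty has already been absorbed into the three lemmas. The only point needing care is the bookkeeping at the boundaries. The middle range must be taken as closed at both ends so that none of the $l+1$ indices is counted twice or missed. One should also check the degenerate case $k=l/2$, where the middle range consists of the single index $n=k$; the formula still holds there.
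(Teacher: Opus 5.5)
Your proposal is correct and is essentially the paper's proof. It uses the same split of $c(W)$ over $[0,k-1]$, $[k,l-k]$, $[l-k+1,l]$, bounded by Lemmas \ref{lem_0,k}, \ref{lem_k,l-k}, \ref{lem_l-k,l} respectively, and the same arithmetic $k(k+1)/2+(k+1)(l-2k+1)+k(k+1)/2=(k+1)(l-k+1)$.
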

\begin{proof}
    From Lemma \ref{lem_0,k} it follows that $\sum_{i=0}^{k-1} f_W(i) \geq k(k+1)/2.$

    From Lemma \ref{lem_k,l-k} it follows that $\sum_{i=k}^{l-k} f_W(i) \geq (k+1)(l-2k+1).$

    From Lemma \ref{lem_l-k,l} it follows that $\sum_{i=l-k+1}^{l} f_W(i) = k(k+1)/2.$

    Therefore, 
    $$c(W) \geq k(k+1)/2 + (k+1)(l-2k+1) + k(k+1)/2 = (k+1)(l-k+1).$$
\end{proof}

\begin{Corollary}\label{c(w)}
    Let $W$ be a word of length $l$, $d\in \NN$, $W$ avoid $d^+$-powers, $k\in \NN$, $l>kd$. Then $c(W)\geq (k+1)(l-k+1)$.
\end{Corollary}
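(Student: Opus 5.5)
The plan is to reduce Corollary \ref{c(w)} to Theorem \ref{c(w)_Q}. The only hypothesis missing compared to the theorem is $k\leq l/2$. In the corollary, $d\in\NN$ is an integer and $d\geq 1$. The constraint $k\le l/2$ is replaced by nothing, so the main work is to treat the case $k>l/2$ separately.

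\emph{Case $k\le l/2$.} Here Theorem \ref{c(w)_Q} applies verbatim, since $\NN\subset\QQ$, and it gives $c(W)\geq (k+1)(l-k+1)$.

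\emph{Case $k> l/2$.} Since $l>kd\ge k$, we have $k\le l-1$. First assume $d\ge 2$. Then $l>kd\ge 2k$, which contradicts $k>l/2$, so this case cannot occur. It remains to handle $d=1$. Then $W$ avoids $1^+$-powers, which, as noted before Lemma \ref{lem_0,k}, means that all letters of $W$ are pairwise distinct. Consequently every factor of $W$ occurs once, so $f_W(i)=l-i+1$ for all $i\in[0,l]$ and
$$c(W)=\sum_{i=0}^{l}(l-i+1)=\frac{(l+1)(l+2)}{2}.$$
We must compare this with $(k+1)(l-k+1)$. By AM--GM, the product of two numbers with sum $l+2$ is at most $\left(\frac{l+2}{2}\right)^2$. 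So it suffices to check that
$$\frac{(l+1)(l+2)}{2}\ge \frac{(l+2)^2}{4},$$
which is equivalent to $2(l+1)\ge l+2$, i.e., $l\ge 0$. This holds, and indeed it holds for every $k$, so for $d=1$ we need not even use $k>l/2$.

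The main point of care is that a direct application of Theorem \ref{c(w)_Q} fails only when $k>l/2$. Integrality of $d$ forces either $d\ge2$, where that range is empty, or $d=1$, where the word consists of distinct letters and the total complexity can be computed explicitly. No other obstacle is expected.
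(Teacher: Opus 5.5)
Your case split matches the paper's proof. For $k\le l/2$ you apply Theorem \ref{c(w)_Q}, for $d\ge 2$ you use $l>kd\ge 2k$ to get $k<l/2$, and you handle $d=1$ by counting factors of a word with pairwise distinct letters. That count, however, is wrong. The empty word is a single factor, so $f_W(0)=1$, as the paper notes in Section \ref{section:comb_on_words}, not $l+1$. Your formula $f_W(i)=l-i+1$ holds only for $1\le i\le l$. Hence
$$c(W)=1+\sum_{i=1}^{l}(l-i+1)=\frac{l(l+1)}{2}+1,$$
which is exactly $l$ less than your value $\frac{(l+1)(l+2)}{2}$. The inequality you verified is therefore not the one the proof needs.

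The repair is short, but it uses a hypothesis you said was unnecessary. With the correct value, your AM--GM comparison becomes
$$\frac{l(l+1)}{2}+1\ge\frac{(l+2)^2}{4},$$
which is equivalent to $l(l-2)\ge 0$ and so requires $l\ge 2$. This follows from $l>kd=k\ge 1$. The restriction is real: for $l=1$ the real-variable bound gives $\frac94>2=c(W)$. This is exactly the paper's estimate
$$(k+1)(l-k+1)\le \frac{l(l+4)}{4}+1\le \frac{l(l+1)}{2}+1 \quad\text{for } l\ge 2.$$
So your claim that the inequality reduces to $l\ge 0$ is false. The correct statement is that it holds for every $k$ once $l\ge 2$, and $l\ge 2$ comes from the hypotheses. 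With this correction your argument coincides with the paper's.
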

\begin{proof}
    Suppose $d=1$. Then $W$ consists of pairwise distinct letters and $c(W) = l(l+1)/2+1$.
    On the other hand, $(k+1)(l-k+1)=-k^2+lk+l+1 \leq l(l+4)/4+1 \leq l(l+1)/2+1$, since $l \geq 2$. Thus, $c(W)\geq (k+1)(l-k+1)$.

    Suppose $d \geq 2$. Then $k < l/2$. From Theorem \ref{c(w)_Q} it follows that $c(W)\geq (k+1)(l-k+1)$.
    
\end{proof}

\begin{Example}
    Let $W=abbabbabbb$, $l=10$, $d=k=3$. Then from Theorem \ref{c(w)_Q} it follows that $c(W)\geq (3+1)(10-3+1)=32$. Below we list all the factors of $W$.
    $$\varepsilon, a, b, ab, ba, bb, abb, bab, bba, bbb, abba, abbb, babb, bbab,$$
    $$abbab, babba, babbb, bbabb, abbabb, babbab, bbabba, bbabbb, abbabba, abbabbb,$$
    $$babbabb, bbabbab, abbabbab, babbabbb, bbabbabb, abbabbabb, bbabbabbb, W.$$
    Therefore, $c(W)=32$.
\end{Example}

\section{Bounds for the length of an algebra}\label{sec:length}

The study of lengths of algebras began in 1984 with the study of the length of the algebra of $n \times n$ matrices $M_n(\FF)$ in a paper \cite{Paz} where the following estimate was proved.

\begin{Proposition}\label{prop:paz}\cite[Theorem 1]{Paz}
     Let $\F$ be an arbitrary field. Then 
     $$l(M_n(\FF)) \leq \left\lceil\frac{n^2 + 2}3 \right\rceil.$$
\end{Proposition}

In the same work it was conjectured that $l(M_n(\FF))=2n-2$. This hypothesis is known as Paz’s conjecture.

In recent years, the values of the lengths of various matrix algebras and group algebras have been actively studied (see, e.\,g., \cite{Kh24,Kh25} and references therein).

Note that for a set $\SS$ its length evaluation is a standard linear algebraic problem of constructing a certain basis of $\L(\SS)$. However, in the definition of the length of the algebra $\A$ we consider the set of {\em all} generating sets of $\A$. This explains the complexity of length computations for algebras.

\begin{Example}\label{ex:M_2}
    Let $\A$ be the algebra of $2 \times 2$ matrices over a field $\FF$ and $\SS = \{a,b\}$, where
    $$a=\begin{pmatrix}
        0 & 1 \\0 & 0
    \end{pmatrix},\quad
    b=\begin{pmatrix}
        0 & 0 \\1 & 0
    \end{pmatrix}.$$
    For $X \subseteq \A$, by $\langle X \rangle$ we denote the $\F$-linear span of $X$. It is obvious that $\L(\SS) \neq \L_1(\SS)= \langle \varepsilon, a, b\rangle$ and $\L(\SS) = \L_2(\SS)= \langle \varepsilon, a, b, ab\rangle$ (in this case $\varepsilon$ is an identity matrix). Then $l(\SS)=2$. Since $\L(\SS)=\A$, $l(\A)\geq 2$.
    
\end{Example}

As we see in Example \ref{ex:M_2}, a lower bound for the length of an algebra can be obtained by calculating the length of a particular generator system. To obtain upper bounds, general observations are required. Let us consider some general upper bounds for the length that are known to date.

The following upper bound is trivial (see, e.\,g., \cite[Remark 1.2]{Kh25}).

\begin{Proposition}\label{prop:triv}
     Let $\A$ be an associative finite-dimensional unital algebra. Then
$$l(\A)\leq \dim \A -1.$$
\end{Proposition}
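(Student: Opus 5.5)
The plan is to run the standard dimension-counting argument along the filtration by word length. Let $\SS$ be an arbitrary generating set of $\A$, and consider the chain of subspaces
$$\L_0(\SS)\subseteq \L_1(\SS)\subseteq \L_2(\SS)\subseteq \dots \subseteq \L(\SS)=\A .$$
Since $\A$ is finite-dimensional, this chain stabilizes. It then suffices to prove $l(\SS)\leq \dim\A-1$, because $\SS$ is arbitrary and the bound passes to the maximum in Definition \ref{alg_len}.

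\textbf{Key step 1: strict growth until stabilization.} We claim that if $\L_i(\SS)=\L_{i+1}(\SS)$ for some $i$, then $\L_j(\SS)=\L_i(\SS)$ for all $j\geq i$. We show this by induction on $j$. Every word of length $j+2$ has the form $sw$ with $s\in\SS$ and $w$ a word of length at most $j+1$. By the inductive hypothesis $w\in \L_{i+1}(\SS)=\L_i(\SS)$, so $w$ is a linear combination of words of length at most $i$. Hence $sw$ is a linear combination of words of length at most $i+1$, and therefore $sw\in\L_{i+1}(\SS)=\L_i(\SS)$. Consequently, whenever $\L_i(\SS)\neq\L(\SS)$, the inclusion $\L_i(\SS)\subsetneq \L_{i+1}(\SS)$ is strict, i.e.\ $\dim \L_{i+1}(\SS)\geq \dim\L_i(\SS)+1$.

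\textbf{Key step 2: counting.} Since $1_\A\in\L_0(\SS)$, we have $\dim \L_0(\SS)\geq 1$. If $l(\SS)=k$, then the subspaces $\L_0(\SS),\L_1(\SS),\dots,\L_k(\SS)$ are pairwise distinct, so by Step 1 the dimension rises by at least one at each of the $k$ steps. This gives
$$\dim\A\geq \dim\L_k(\SS)\geq 1+k,$$
that is, $k\leq\dim\A-1$.

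There is no real obstacle here. The only point requiring care is the stabilization claim in Step 1, since it is what guarantees that each step before $l(\SS)$ strictly increases the dimension. The degenerate case $\A=\F$ is covered by the same argument: there $l(\SS)=0\leq \dim\A-1=0$.
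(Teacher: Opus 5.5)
Your proof is correct. The stabilization claim in Step 1 forces $\dim\L_i(\SS)$ to increase strictly for every $i<l(\SS)$, which gives $\dim\A\geq l(\SS)+1$; this is the standard argument behind the bound, which the paper calls trivial and supports only with a citation, not a proof. The paper also notes that the bound is the case $k=0$ of Theorem \ref{thm:main}, i.e.\ $l+1\leq c(\LIW{l})\leq\dim\A$ via Lemma \ref{s_W}, and your direct argument avoids that route's extra baggage: finiteness of $\SS$, the shortlex machinery, and the hypotheses $m(\A)>1$, $|\F|>m(\A)$ carried by that theorem.
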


For $a\in \A$, we denote by $\mu_a(t)\in \F[t]$ and $\deg a$ the minimal polynomial of the element $a$ over $\F$ and its degree, respectively. Since $\A$ is finite dimensional, it follows that  $\deg a \leq \dim\A$. Then we define $m(\A)=\max\{\deg a \mid a\in \A\}$ and $m(\SS^*)=\max\{\deg W \mid  \text{$W\in \SS^i$ for some $i\geq 0$}\}$. For example, $m(M_n(\F))=n$, by the Cayley--Hamilton theorem. The case $m(\A)=1$ is trivial. Indeed, in this case $\A=\langle 1_{\A} \rangle$. Therefore, we further assume that $m(\A)>1$.

The following upper bound was proved in \cite{dih} for calculating the length of the group algebra of the dihedral group.

\begin{Proposition}\label{prop:kh}\cite[Theorem 3.5]{dih}
     Let $\A$ be an associative finite-dimensional unital algebra. Then
$$l(\A)\leq \max\left\{m(\A)-1,\frac{\dim\A}{2}\right\}.$$
\end{Proposition}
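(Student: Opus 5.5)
The plan is to fix a generating set $\SS$ with $l(\SS)=l(\A)=:l$ and study the strictly increasing chain
$$\langle 1_\A\rangle=\L_0(\SS)\subsetneq \L_1(\SS)\subsetneq\dots\subsetneq \L_l(\SS)=\A .$$
Write $d_i=\dim\L_i(\SS)-\dim\L_{i-1}(\SS)\geq 1$ for $1\le i\le l$. Then $\dim\A=1+\sum_{i=1}^l d_i$. The proof splits according to where the first increment equal to $1$ occurs.

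Let $h$ be the smallest index with $d_h=1$, and put $h=l+1$ if no such index exists. There are three cases.

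\emph{No increment equals $1$.} If $h=l+1$, then $\dim\A\geq 1+2l$, so $l\leq \dim\A/2$.

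\emph{The first increment equals $1$.} If $h=1$, then $\L_1(\SS)=\langle 1_\A,a\rangle$ for a single element $a$. Every element of $\SS$ is then a polynomial in $a$, so $\A=\F[a]$ and $\dim\A=\deg a\le m(\A)$. Since the chain is strict, $l\le \dim\A-1\le m(\A)-1$.

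\emph{The general case $2\le h\le l$.} First I will show that $d_j\le 1$ for every $j\ge h$. Because $\L_h(\SS)=\L_{h-1}(\SS)+\langle w\rangle$ for one word $w$ of length $h$, every word of length $h$ is congruent to a scalar multiple of $w$ modulo $\L_{h-1}(\SS)$. Consequently
$$\L_{j+1}(\SS)=\L_j(\SS)+\L_{j+1-h}(\SS)\,w ,$$
and an induction on $j$ shows that each new layer is spanned modulo the previous one by a single word. Since the chain is strict, this gives $d_j=1$ for $h\le j\le l$, and hence
$$\dim\A\geq 1+2(h-1)+(l-h+1)=l+h .$$
So it suffices to prove either $h\geq l$ or $l\le m(\A)-1$.

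This last step is the main obstacle. I would attack it through a word $W$ of length $l$ that is irreducible, meaning $W\notin\L_{l-1}(\SS)$. Every factor of such a $W$ is irreducible as well, since otherwise $W$ could be rewritten using shorter words. The first thing to try is the following reduction. Iterate the relation "any length-$h$ factor $\equiv c\,w$ modulo $\L_{h-1}(\SS)$" along $W$. This should reduce $W$, modulo $\L_{l-1}(\SS)$, to a scalar multiple of $u\,w^{s}$ with $|u|<h$. Then $w^s$ is irreducible, so $1,w,\dots,w^{s}$ are linearly independent, which gives $s\le \deg w-1\le m(\A)-1$. Combining $l<h(s+1)$ with $\dim\A\ge l+h$ should then yield $l\le\max\{m(\A)-1,\dim\A/2\}$. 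I expect the delicate point to be handling the partial block $u$ and the case $h$ close to $l$. For this I would use the combinatorial description in Theorem \ref{MH_fin}: the length-$h$ factors of $W$ all collapse to one class, and this forces $W$ to have the periodic form $a_1\dots a_q(a_{q+1}\dots a_{q+p})^{\frac{l-q-t}{p}}a_{l-t+1}\dots a_l$ with small $q+p+t$.
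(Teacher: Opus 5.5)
Your case split, the bound $\dim\A\ge l+h$, and the reduction to ``$h\ge l$ or $l\le m(\A)-1$'' are sound. One small repair there: for $h\le j<2h-1$ the identity $\L_{j+1}(\SS)=\L_j(\SS)+\L_{j+1-h}(\SS)\,w$ does not by itself give one-dimensional layers, because words of length $j+1-h<h$ can span several dimensions modulo $\L_{j-h}(\SS)$. Peel one letter at a time instead. With $w=w_1\cdots w_h$ and $b=w_h$, one has $a\,w\,b^i=(a\,w_1\cdots w_{h-1})\,b^{i+1}\equiv c\,w\,b^{i+1}$ modulo $\L_{h+i}(\SS)$, so the $j$-th layer is spanned by $w\,b^{j-h}$.

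\textbf{The genuine gap is the final step.} Reducing $W$ to $c\,u\,w^s$ bounds only the number of blocks, $s\le m(\A)-1$. The facts $l<h(s+1)$, $s\le m(\A)-1$ and $\dim\A\ge l+h$ do not imply the claim. For example, $m(\A)=3$, $h=2$, $s=2$, $l=5$, $\dim\A=7$ satisfy all three, yet $5>\max\{2,7/2\}$. Theorem \ref{MH_fin} cannot close this as you intend either. For an arbitrary irreducible $W$, the length-$h$ factors are only proportional modulo $\L_{h-1}(\SS)$, not equal as words, so nothing controls $f_W(h)$.

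The missing idea is that, when $2\le h<l$, you must push all the way to a power of a \emph{single letter} $b$. Then $b^l\notin\L_{l-1}(\SS)$ makes $1,b,\dots,b^l$ linearly independent, which gives $l\le\deg b-1\le m(\A)-1$. There are two ways to do this.

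\emph{Algebraic route.} Let $u=PQ$ be irreducible with $|P|=h$, $|Q|=l-h$, and let $F$ be its suffix of length $h$. Both $P$ and $F$ are nonzero multiples of $w$ modulo $\L_{h-1}(\SS)$, so $P\equiv cF$ with $c\neq 0$. Hence $FQ\equiv c^{-1}u$ modulo $\L_{l-1}(\SS)$ is again irreducible. Start from $w\,b^{l-h}$, whose final run of $b$'s has length at least $l-h+1$. Each step lengthens that run by at least $l-h$ until the word becomes $b^l$.

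\emph{Combinatorial route.} Take $W=\LIW{l}$. The argument behind Lemma \ref{s_W} shows that distinct factors of $W$ of length $j$ are linearly independent modulo $\L_{j-1}(\SS)$. So $f_W(h)\le 1$, which forces $W=b^l$ when $h<l$.

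The paper needs no filtration at all. It obtains the statement as the case $k=1$ of Theorem \ref{thm:main}. Unwound, that says: if $l>m(\A)-1$, then $\LIW{l}$ is not a one-letter power. Hence $f_W(n)\ge 2$ for $1\le n\le l-1$, and $2l\le c(\LIW{l})\le\dim\A$ by Lemma \ref{s_W}. Your repaired argument and this $k=1$ specialization both work over any field. The only power-avoidance they use is that an irreducible $b^l$ forces $l\le m(\A)-1$, so the hypothesis $|\F|>m$ of Theorem \ref{thm:main} is not needed here.
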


The following general upper bound in the particular case $\A=M_n(\F)$ asymptotically improves bound on $l(M_n(\FF))$ from Proposition \ref{prop:paz}.

\begin{Proposition}\label{prop:pap}\cite[Corollary 3.2]{Pap}
     Let $\A$ be an associative finite-dimensional unital algebra, $m=m(\A)>1$, $d=\dim \A$. Then
$$l(\A) < m \sqrt{\dfrac{2d}{m-1} + \dfrac{1}{4}} + \dfrac{m}{2} - 2.$$
\end{Proposition}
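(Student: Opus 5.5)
The plan is to bound $l(\A)$ by counting the distinct factors of one well-chosen word, and to use Theorem~\ref{c(w)_Q} to turn power avoidance into a lower bound on $\dim\A$.

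\emph{Setup.} Let $\SS$ be a generating set with $l(\SS)=L=l(\A)$. We may assume $L\geq 2$, since otherwise the bound is trivial. Fix a total order on $\SS$ and order words over $\SS$ degree-lexicographically. Call a word \emph{irreducible} if, as an element of $\A$, it does not lie in the span of strictly smaller words. The irreducible words form a basis of $\L(\SS)=\A$. Since $L=l(\SS)$, some irreducible word $W$ has length exactly $L$.

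\emph{Factors of $W$.} The standard observation is that every factor of an irreducible word is irreducible. Suppose $W=V_1UV_2$ with $U$ a combination of words smaller than $U$. Substituting gives $W$ as a combination of words $V_1U'V_2$ with $U'<U$, and these are smaller than $W$ because deg-lex order is compatible with concatenation. Distinct factors of $W$ are therefore distinct irreducible words, hence linearly independent. This gives $d=\dim\A\geq c(W)$, where the empty factor accounts for $1_\A$.

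\emph{Power avoidance.} Next I show that $W$ avoids $m^+$-powers, where $m=m(\A)$. If $W$ had a factor $U^\alpha$ with $\alpha>m$ and $U\neq\varepsilon$, it would contain $U^m$. Since $\deg U\leq m$, the element $U^m$ is a linear combination of $U^0,\dots,U^{m-1}$. Each of these is a shorter word, so $U^m$ is reducible, contradicting the previous step.

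\emph{Choice of $k$.} Apply Theorem~\ref{c(w)_Q} with this $d$-parameter equal to $m$ and $k=\lceil L/m\rceil-1$. Then $km<L$. Also $k\leq L/2$, since $m\geq 2$. If $k=0$ the claimed bound is easy, because $L\leq m$ and then one checks it directly; otherwise $k\geq 1$. The theorem gives
$$d\geq (k+1)(L-k+1).$$
Using $k+1\geq L/m$ and $k\leq (L-1)/m$, this yields
$$dm^2\geq L\bigl(L(m-1)+m\bigr).$$
Solving this quadratic inequality in $L$ should give
$$L\leq m\sqrt{d/(m-1)+\tfrac14}\,-\,\tfrac{m}{2(m-1)}\cdot(\ldots),$$
which I expect to be at least as strong as the stated bound $m\sqrt{2d/(m-1)+1/4}+m/2-2$.

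\emph{Main obstacle.} The delicate part is this last arithmetic comparison: making sure the inequality is strict, and handling small $L$ (the case $k=0$) and the rounding in $k$. If the direct solution of the quadratic is not cleanly below the target, I would fall back on the cruder estimate $d\geq (L/m)\cdot L(m-1)/m$. That gives $L\leq m\sqrt{d/(m-1)}$, which is already strictly smaller than the target whenever $m\geq 2$, because $\sqrt{2d/(m-1)+1/4}>\sqrt{d/(m-1)}$ and $m/2-2\geq -1$ still leaves room after a short check.
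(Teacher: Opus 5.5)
Your argument is correct, and it takes a genuinely different route from the paper's. The paper does not reprove Pappacena's bound directly. In Section~\ref{sec:compare} it substitutes the fixed value $k=\lfloor\sqrt{d/m}\rfloor$ into Theorem~\ref{thm:main}, which gives $l(\A)<\sqrt{dm}\,(1+1/m)-1$, and then uses $2m/(m-1)>(1+1/m)^2$. Theorem~\ref{thm:main} itself rests on two ingredients:
\begin{itemize}
\item Lemma~\ref{lem:reduce_power}, which shows that irreducible words avoid $(m-1)^+$-powers via the shift-and-invert trick of Proposition~\ref{prop:shift}; this is where the hypothesis $|\F|>m$ enters;
\item the cited Lemma~\ref{s_W}.
\end{itemize}
You instead make three different choices:
\begin{itemize}
\item you settle for $m^+$-power avoidance, which follows directly from $U^m\in\langle 1,U,\dots,U^{m-1}\rangle$;
\item you prove $c(W)\le\dim\A$ yourself via deg-lex normal words;
\item you choose $k=\lceil L/m\rceil-1$ depending on $L$, so the hypothesis $L>km$ of Theorem~\ref{c(w)_Q} holds automatically and no two-case maximum appears.
\end{itemize}
What you gain is that the proposition holds over an arbitrary field, as in Pappacena's original, whereas the paper's derivation is valid only for $|\F|>m$. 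What you give up is that exponent $m$ rather than $m-1$ is too weak to yield Theorem~\ref{thm:main} itself.

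Finish with your fallback, but make the ``short check'' explicit. The inequality $\sqrt{2x+1/4}>\sqrt{x}$ alone does not absorb $m/2-2=-1$ when $m=2$. Put $x=d/(m-1)$.
\begin{itemize}
\item From $k+1\ge L/m$ and $L-k+1>L(m-1)/m$ you get $L<m\sqrt{x}$.
\item Since an element of degree $m$ has $m$ linearly independent powers, $d\ge m$, so $x\ge m/(m-1)>1$.
\item Hence $\sqrt{2x+1/4}-\sqrt{x}\ge 1/2$, and therefore $m\sqrt{2x+1/4}+m/2-2\ge m\sqrt{x}+m-2\ge m\sqrt{x}>L$.
\item The same fact handles the case $k=0$: there $L\le m<m\sqrt{x}$.
\end{itemize}
Your displayed root is garbled. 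The exact root of your quadratic is $L\le m\sqrt{d/(m-1)+1/(4(m-1)^2)}-m/(2(m-1))$.

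Finally, take $\SS$ finite, which is allowed by Remark~\ref{rem:shift}(b), so that the deg-lex reduction terminates.
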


Despite the fact that the bound from Proposition \ref{prop:pap} is asymptotically better than the others in the case $\A = M_n(\F)$, there are no examples for which this estimate would be sharp. On the other hand, bounds from Proposition \ref{prop:paz}, Proposition \ref{prop:triv} and Proposition \ref{prop:kh} are sharp for $n\leq 4$, one generated algebras and group algebra of the dihedral group, respectively. In the next section we prove an improved version of the general upper bound on the length using combinatorial results from Section \ref{section:c(W)}. In particular, obtained bound is sharp in the cases described above.

\section{An improved general upper bound for the length}\label{sec:bound}

The main result of this section, Theorem \ref{thm:main}, generalizes all the above bounds for $l(\A)$ and contains them as special cases.

\begin{Remark} \label{rem:shift}
a) For $\SS, \widetilde \SS \subseteq \A$, we notice that $l(\SS) = l(\widetilde\SS)$ whenever $\langle \SS \cup \{1_\A\} \rangle = \langle \widetilde\SS  \cup \{1_\A\} \rangle$. Particularly, we may shift each element $x \in \SS$, replacing it with $x + \lambda_x \cdot 1_\A$ for any $\lambda_x \in \FF$, and the length of the obtained set will be the same.

b) We have $l(\SS) = l(\langle\SS \rangle)$. This usually allows us to assume that $\SS$ is finite when computing $l(\SS)$ (by considering a basis of $\langle\SS \rangle$).

\end{Remark}


\begin{Proposition}\label{prop:shift}
    Let $\A$ be an associative finite-dimensional unital $\F$-algebra, $x\in \A$, $\deg x = m$, $|\F|>m$. Then there exists $\lambda \in \F$ such that $x+\lambda \cdot 1_{\A}$ is invertible and $(x+\lambda \cdot 1_{\A})^{-1} \in \L_{m-1}(\{x\})$.
\end{Proposition}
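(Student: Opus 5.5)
Let $\mu_x(t)$ be the minimal polynomial of $x$, of degree $m$. It has at most $m$ roots in $\F$. Since $|\F|>m$, we can choose $\lambda\in\F$ such that $-\lambda$ is not a root of $\mu_x$, that is, $\mu_x(-\lambda)\neq 0$. We then set $y=x+\lambda\cdot 1_\A$. By Remark~\ref{rem:shift}, $\L_{j}(\{y\})=\L_j(\{x\})$ for every $j$, since $y^j$ is a polynomial of degree $j$ in $x$ and conversely. So it is enough to show that $y$ is invertible and $y^{-1}\in\L_{m-1}(\{y\})$.

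The minimal polynomial of $y$ is $\mu_y(t)=\mu_x(t-\lambda)$. It has degree $m$, and its constant term is $c_0=\mu_y(0)=\mu_x(-\lambda)\neq 0$. Write $\mu_y(t)=t^m+c_{m-1}t^{m-1}+\dots+c_1t+c_0$. From $\mu_y(y)=0$ we get
$$y\bigl(y^{m-1}+c_{m-1}y^{m-2}+\dots+c_1 1_\A\bigr)=-c_0 1_\A .$$
The bracket commutes with $y$, so the same identity holds with the factors in the other order. Hence $y$ is invertible, with
$$y^{-1}=-c_0^{-1}\bigl(y^{m-1}+c_{m-1}y^{m-2}+\dots+c_1 1_\A\bigr).$$
This is a linear combination of $1_\A,y,\dots,y^{m-1}$, so it lies in $\L_{m-1}(\{y\})=\L_{m-1}(\{x\})$.

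The only real point is choosing $\lambda$, which is where the hypothesis $|\F|>m$ is used. The rest is the standard Cayley--Hamilton-type inversion, so I expect no serious obstacle.
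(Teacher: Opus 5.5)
Your proof is correct and is essentially the paper's argument: both choose $\lambda$ with $\mu_x(-\lambda)\neq 0$ (the paper phrases this as $t+\lambda$ being coprime to $\mu_x(t)$ inside $\L(\{x\})=\L_{m-1}(\{x\})\cong\F[t]/(\mu_x(t))$), and you simply write the inverse down explicitly from the nonzero constant term of $\mu_x(t-\lambda)$. One minor quibble: the equality $\L_j(\{y\})=\L_j(\{x\})$ follows from your own observation that $(x+\lambda\cdot 1_\A)^j$ and $(y-\lambda\cdot 1_\A)^j$ are polynomials of degree $j$, not from Remark~\ref{rem:shift}, which only asserts equality of lengths.
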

\begin{proof}
    Clearly, $\L(\{x\})$ is $m$-dimensional subalgebra of the algebra $\A$ and 
    $$\L(\{x\})=\L_{m-1}(\{x\}) \cong \F[t]/(\mu_x(t)).$$ 
    Then $x+\lambda\cdot 1_{\A}$ is invertible if and only if $t+\lambda$ and $\mu_x(t)$ are coprime. Since $|\F|>m=\deg \mu_x(t)$, there exists $\lambda$ such that $t+\lambda$ and $\mu_x(t)$ are coprime. On the other hand, $(x+\lambda \cdot 1_{\A})^{-1} \in \L(\{x\})=\L_{m-1}(\{x\})$.
\end{proof}

\begin{Definition}  A word $W \in  \SS^{j}$ of length $j$ is called  {\em reducible over $\SS$\/} if there is  a number $i<j$ such that  $W \in \L_i(\SS)$ (i.\,e., $W$ can be expressed as a linear combination of  words of smaller lengths). If $W$ is not reducible, then it is called
{\em irreducible over  $\SS$\/}.
\end{Definition}

It is clear that if $W$ contains a reducible factor, then $W$ is reducible. Note that irreducible words of length $j$ over $\SS$ exist if and only if $j\leq l(\SS)$.

\begin{Lemma}\label{lem:reduce_power}
    Let $\A$ be an associative finite-dimensional $\F$-algebra, $|\F|>m(\A)$, $\SS \subseteq \A$. Then any irreducible over $\SS$ word $W$ avoids $(m(\A)-1)^+$-powers.
\end{Lemma}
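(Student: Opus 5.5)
The plan is to argue by contradiction: assume that an irreducible word $W$ over $\SS$ contains a factor $U^\alpha$ with $\alpha>m-1$, where $m=m(\A)$, and show that some factor of $W$ is reducible. This is impossible, since a word with a reducible factor is itself reducible. Write $U=u_1u_2\dots u_p$ with $u_j\in\SS$ and $p\geq 1$. Since $\alpha p>(m-1)p$ is an integer length, the factor $U^\alpha$ has length at least $(m-1)p+1$. Its prefix of that length is $V=U^{m-1}u_1$, so it suffices to show that $V\in\L_{(m-1)p}(\SS)$.

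\emph{Step 1: reduction to invertible letters.} For each letter $u\in\SS$, $\deg u\leq m<|\F|$, so Proposition \ref{prop:shift} gives $\lambda_u$ such that $u'=u+\lambda_u 1_\A$ is invertible with $u'^{-1}\in\L_{m-1}(\{u\})$. Let $\SS'$ be the shifted set. Expanding products shows two things:
\begin{itemize}
\item $\L_i(\SS)=\L_i(\SS')$ for every $i$ (compare Remark \ref{rem:shift}(a));
\item for a word $X$ of length $j$ over $\SS$ and its shifted counterpart $X'$, we have $X-X'\in\L_{j-1}(\SS)$.
\end{itemize}
Hence $X$ is reducible over $\SS$ if and only if $X'$ is reducible over $\SS'$. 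Shifting does not change the set of positions of a periodic factor, so we may assume that every letter is invertible with $u^{-1}\in\L_{m-1}(\{u\})$. This step is the main bookkeeping obstacle: one must check that shifting preserves both the notion of reducibility and the power structure (positions) of factors. It does, since we only relabel letters.

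\emph{Step 2: use the minimal polynomial of $U$.} Put $x=U\in\SS^p$. Then $\deg x\leq m$, so $x^m=\sum_{i=0}^{m-1}c_ix^i$. If $p=1$, this already shows that $V=x^m$ lies in $\L_{m-1}(\SS)$, and we are done. If $p\geq 2$, set $U'=u_2\dots u_p$, which is invertible. Then $x^m=x^{m-1}u_1U'$, so
$$V=x^{m-1}u_1=\sum_{i=0}^{m-1}c_i\,x^iU'^{-1}.$$

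\emph{Step 3: bound the lengths of the terms.} For $i\geq1$ we have $x^iU'^{-1}=x^{i-1}u_1$, a word of length $(i-1)p+1\leq(m-2)p+1<(m-1)p+1$. For $i=0$ we have $U'^{-1}=u_p^{-1}\cdots u_2^{-1}$, and each $u_j^{-1}\in\L_{m-1}(\{u_j\})$. Hence $U'^{-1}\in\L_{(p-1)(m-1)}(\SS)$, and $(p-1)(m-1)<(m-1)p+1$. Therefore $V\in\L_{(m-1)p}(\SS)$, so $V$ is reducible, contradicting the irreducibility of $W$. This shows that $W$ avoids $(m(\A)-1)^+$-powers.
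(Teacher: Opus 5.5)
Your proof is correct and follows essentially the same route as the paper: shift the letters to be invertible via Proposition \ref{prop:shift}, expand $U^{m}$ using the minimal polynomial, multiply on the right by $(u_2\cdots u_p)^{-1}$, and compare lengths to show that $U^{m-1}u_1$ is reducible (the paper writes this word as $U^{\frac{k(m(\A)-1)+1}{k}}$). Your explicit check that $X-X'\in\L_{j-1}(\SS)$ and $\L_i(\SS)=\L_i(\SS')$, which makes reducibility shift-invariant, justifies a step the paper covers only by citing Remark \ref{rem:shift}.
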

\begin{proof}
    Suppose $W$ has a factor $U^d$, where $U=a_1\dots a_k$, $a_i\in \SS$ for all $i\in [1,k]$. By the definition of $m(\A)$,
    $$U^{m(\A)} =\sum_{j=0}^{m(\A)-1} \lambda_j U^j,$$
    where $\lambda_0, \dots \lambda_{m(\A)-1} \in \F$. Since $|\F|>m(\A)$, by Remark \ref{rem:shift} and Proposition \ref{prop:shift} we may assume that $a_1,\dots, a_k$ are invertible. Multiplying both sides of the equality by  $a_k^{-1}\dots a_2^{-1}$ from the right, we get
    $$U^{\frac{k(m(\A)-1)+1}{k}} =\sum_{j=1}^{m(\A)-1} \lambda_j U^{j-1}a_1 + \lambda_0 a_k^{-1}\dots a_2^{-1}.$$
    
        Clearly, $\sum_{j=1}^{m(\A)-1} \lambda_j U^{j-1}a_1\in \L_{k(m(\A)-2)+1}(\SS)$. Since $\deg a_i \leq m(\A)$, Proposition \ref{prop:shift} guarantees that $a_i^{-1} \in \L_{\deg a_i-1}(\{a_i\})\subseteq \L_{m(\A)-1}(\{a_i\})\subseteq \L_{m(\A)-1}(\SS)$ for all $i\in [1,k]$. Therefore, $a_k^{-1}\dots a_2^{-1} \in \L_{(k-1)(m(\A)-1)}(\SS)$. Since $k(m(\A)-1)+1>k(m(\A)-2)+1$ and $k(m(\A)-1)+1>(k-1)(m(\A)-1)$, $U^{\frac{k(m(\A)-1)+1}{k}}$ is reducible.
        
        We obtain that $d<\frac{k(m(\A)-1)+1}{k}$, otherwise $W$ is reducible. Then $kd<k(m(\A)-1)+1$ and $d\leq (m(\A)-1)$. Thus, $W$ avoids $(m(\A)-1)^+$-powers.
\end{proof}

To prove the main result of this section we need to fix an order on words. For a finite set $\SS = \{a_1, a_2, \dots, a_k\}$, we define a total order on $\SS^*$ (finite-length words over $\SS$), called the {\it shortlex order}. First we order the letters from $\SS$, $a_1 \prec a_2 \prec \ldots \prec a_k$. Then for $U =a_{i_1}a_{i_2}\dots a_{i_p}$ and $W =a_{j_1}a_{j_2}\dots a_{j_q}$ from $\SS^*$, we set $U \prec W$ if and only if
\[
\text{$p < q$ or ($p = q$ and $i_1 = j_1, i_2 = j_2, \dots, i_{t-1} = j_{t-1}$ but $i_t < j_t$, for some $t \in [1, p]$)}.
\]
By the definition we see that $\prec$ is a total ordering of $\SS^*$.

\begin{Definition}
Let $\SS$ be a finite subset of an $\F$-algebra $\A$ and $\prec$ be a shortlex order on $\SS^*$. Denote by $\LIW{i}$ the least element of the set
$
\{U \in S^i \mid \text{$U$ is irreducible over $\SS$} \}
$ with respect to $\prec$. In other words, $\LIW{i}$ is the lexicographically minimal word among all the irreducible words over $\SS$ of length $i$.
\end{Definition}

Note that $\LIW{i}$ is irreducible by the definition. Therefore, $\LIW{i}$ exists if and only if $i \leq l(\SS)$.

The following lemma connects the combinatorial concept of word complexity with the algebraic concept of the dimension of an algebra.

\begin{Lemma}\label{s_W}\cite[Lemma 2.14]{Kh25}
    Let $\SS$ be a finite subset of an $\F$-algebra $\A$ and $\SS^*$ be equipped with a shortlex order $\prec$.
    Then $c(\LIW{i}) \leq \dim \A$, for any $i \geq 1$.
\end{Lemma}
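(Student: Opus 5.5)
The plan is to show that the set of all distinct factors of $W=\LIW{i}$ (including the empty factor $\varepsilon=1_\A$) is linearly independent in $\A$. Since $c(W)$ counts exactly these distinct factors, the inequality $c(W)\le\dim\A$ follows immediately. If $i>l(\SS)$, the word $\LIW{i}$ does not exist, so we assume $i\le l(\SS)$ and let $W=\LIW{i}$, which is irreducible.

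First I would record a compatibility property of the shortlex order with concatenation. Let $U'\prec U$ and let $V_1,V_2$ be words.
\begin{itemize}
\item If $|U'|<|U|$, then $V_1U'V_2$ is strictly shorter than $V_1UV_2$.
\item If $|U'|=|U|$, then $V_1U'V_2$ and $V_1UV_2$ have the same length. They agree on the prefix $V_1$, and the first position where they differ lies inside the $U$-block, so $V_1U'V_2\prec V_1UV_2$ lexicographically.
\end{itemize}
In both cases $V_1U'V_2\prec V_1UV_2$, and the former has length at most $|V_1UV_2|$.

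Now suppose, for contradiction, that there is a nontrivial linear relation $\sum_U \mu_U U=0$ among pairwise distinct factors $U$ of $W$. Let $U_0$ be the $\prec$-largest factor with $\mu_{U_0}\neq 0$. Then $U_0$ is a linear combination of words $U'\prec U_0$, each of length at most $|U_0|$. Write $W=V_1U_0V_2$ for some occurrence of $U_0$; this works even if $U_0=\varepsilon$, in which case the relation says $1_\A$ is a combination of words that are all longer, hence larger. Substituting, $W$ becomes a linear combination of words $V_1U'V_2$. By the compatibility property, each such word either has length $<i$ or has length $i$ and is $\prec W$.

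This next step is the main point. Every word of length $i$ that is $\prec W$ is reducible, by minimality of $W=\LIW{i}$ among irreducible words of length $i$. Hence it lies in $\L_{i-1}(\SS)$. The shorter words lie in $\L_{i-1}(\SS)$ trivially. Therefore $W\in\L_{i-1}(\SS)$, i.e.\ $W$ is reducible, which contradicts the choice of $W$.

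So the distinct factors of $\LIW{i}$ are linearly independent, and $c(\LIW{i})\le\dim\A$. The only delicate point is the substitution step: it needs both the order compatibility and the lexicographic minimality of $\LIW{i}$. The remaining steps are bookkeeping.
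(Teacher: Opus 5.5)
Your proof is correct: a nontrivial dependence among the distinct factors lets you express the $\prec$-largest one, $U_0$, through $\prec$-smaller words, and substituting into $\LIW{i}=V_1U_0V_2$ puts $\LIW{i}$ into $\L_{i-1}(\SS)$, using that shortlex is compatible with concatenation and that every length-$i$ word $\prec\LIW{i}$ is reducible; the paper has no proof of its own here, only the citation \cite[Lemma 2.14]{Kh25}, and yours is the standard leading-word argument for this fact. One small slip: if $U_0=\varepsilon$ is the $\prec$-largest term, no other word can occur at all since $\varepsilon$ is $\prec$-minimal, so the relation reads $1_\A=0$ and gives $W=0\in\L_{i-1}(\SS)$ rather than a combination of longer words, and your substitution argument with an empty sum already covers this case.
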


\begin{Theorem}\label{thm:main}
     Let $\A$ be an associative finite-dimensional unital $\F$-algebra, $d=\dim\A$, $m=m(\A)>1$, $|\F|>m$. Then for any $k\in \NN$,
$$l(\A)\leq \max\left\{k(m-1),\frac{d}{k+1}+k-1\right\}.$$
\end{Theorem}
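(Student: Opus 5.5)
Take a generating set $\SS$ of $\A$ with $l(\SS)=l(\A)$, and write $l=l(\SS)$. By Remark \ref{rem:shift}(b) we may replace $\SS$ by a basis of $\langle \SS\rangle$, so $\SS$ is finite; we fix a shortlex order $\prec$ on $\SS^*$. If $l\le k(m-1)$ the bound already holds, so assume $l>k(m-1)$. Put $W=\LIW{l}$; this word exists because $l\le l(\SS)$, and it is irreducible. The idea is to squeeze $c(W)$ between a combinatorial lower bound (Section \ref{section:c(W)}) and the algebraic upper bound $\dim\A$ from Lemma \ref{s_W}.

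\textbf{Step 1 (power avoidance).} Since $|\F|>m$ and $W$ is irreducible over $\SS$, Lemma \ref{lem:reduce_power} shows that $W$ avoids $(m-1)^+$-powers.

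\textbf{Step 2 (lower bound on complexity).} Apply Corollary \ref{c(w)} with the integer $d'=m-1\ge 1$. We have $l>kd'$, so $c(W)\ge (k+1)(l-k+1)$. The corollary does not assume $k\le l/2$, which helps. Still, I would check the edge case: for $d'=1$ the corollary's proof uses $l\ge 2$, and this holds because $l>k\ge 1$.

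\textbf{Step 3 (combine).} Lemma \ref{s_W} gives $c(W)\le d$. Therefore $(k+1)(l-k+1)\le d$, which gives $l\le \frac{d}{k+1}+k-1$. Together with the case $l\le k(m-1)$, this proves $l(\A)\le\max\{k(m-1),\frac{d}{k+1}+k-1\}$.

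The main obstacle is justifying that the hypotheses of the earlier results match. Lemma \ref{lem:reduce_power} uses Proposition \ref{prop:shift} to shift the letters so they become invertible. Shifting replaces $\SS$ by a different set $\widetilde\SS$, so irreducibility of a given word should be argued with respect to $\widetilde\SS$. I would handle this by making the shift at the very start: pick the shifted basis first (this does not change the length, by Remark \ref{rem:shift}(a)), and only then define $\prec$ and $W$. Lemma \ref{lem:reduce_power} is stated for an arbitrary $\SS$, though, so citing it directly should suffice. The algebra need not be generated by $\SS$ in the lemma, but here it is, so $\dim\A$ is the correct upper bound in Lemma \ref{s_W}.
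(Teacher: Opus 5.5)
Your proposal is correct and follows the paper's proof essentially step for step. You assume $l(\SS)>k(m-1)$, use Lemma \ref{lem:reduce_power} to get that $\LIW{l}$ avoids $(m-1)^+$-powers, apply Corollary \ref{c(w)} with $d=m-1$ to obtain $c(\LIW{l})\ge (k+1)(l-k+1)$, and compare this with the upper bound $c(\LIW{l})\le \dim\A$ from Lemma \ref{s_W}. Your extra checks are sound: passing to a finite basis so that Lemma \ref{s_W} applies, and verifying $l\ge 2$ when $m=2$. Your worry about shifting is unnecessary, because shifting the letters preserves both irreducibility and the power structure of words, which is exactly why Lemma \ref{lem:reduce_power} can be cited directly.
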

\begin{proof}
    Let $\SS$ be an arbitrary generating set of $\A$. We are to prove that for any $k\in \NN$, $l=l(\SS)\leq \max \left\{k(m-1),\frac{d}{k+1}+k-1\right\}$.

    Suppose $l > k(m-1)$. From Lemma \ref{lem:reduce_power} it follows that $\LIW{l}$ avoids $(m-1)^+$-powers. Now we apply Corollary \ref{c(w)} (for $W=\LIW{l}$ and $d=m-1$) and obtain $c(\LIW{l})\geq (k+1)(l-k+1)$. On the other hand, from Lemma \ref{s_W} it follows that $c(\LIW{l}) \leq d$. This completes the proof.
\end{proof}

\begin{Corollary}
      Let $\SS$ be a finite subset of an associative finite-dimensional unital $\F$-algebra $\A$, $d=\dim \L(\SS)$, $m=m(\SS^*)>1$, $|\F|>m$. Then for any $k\in \NN$,
$$l(\SS)\leq \max\left\{k(m-1),\frac{d}{k+1}+k-1\right\}.$$    
\end{Corollary}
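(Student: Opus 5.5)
The plan is to rerun the proof of Theorem \ref{thm:main}, this time inside the subalgebra $\L(\SS)$ and with $m(\SS^*)$ in place of $m(\A)$. Set $\mathcal{B}=\L(\SS)$. This is a unital subalgebra of $\A$, and $\SS$ generates it, so $\dim\mathcal{B}=d$. I would not apply Theorem \ref{thm:main} to $\mathcal{B}$ directly. The reason is that $m(\mathcal{B})$ may exceed $m(\SS^*)$, since a linear combination of words can have a larger minimal polynomial degree than any single word. So I would instead check which steps of that proof actually use $m(\A)$, and confirm that each of them only needs the degrees of words in $\SS^*$.

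First I will redo Lemma \ref{lem:reduce_power} with $m=m(\SS^*)$. Suppose $W$ is irreducible over $\SS$ and contains a factor $U^{\alpha}$ with $U=a_1\dots a_k$ and $a_i\in\SS$. The argument needs three facts.
\begin{itemize}
\item $U$ is a word, so $\deg U\le m$. This gives the relation $U^{m}=\sum_{j<m}\lambda_jU^j$ after padding with zero coefficients if $\deg U<m$.
\item Each letter $a_i$ is a word of length $1$, so $\deg a_i\le m$. Because $|\F|>m$, Proposition \ref{prop:shift} lets us shift each $a_i$ to an invertible element with $a_i^{-1}\in\L_{m-1}(\{a_i\})$.
\item By Remark \ref{rem:shift}(a), shifting the letters does not change the length. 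It also does not change $\L_i(\SS)$ for any $i$, hence does not change which words are reducible.
\end{itemize}

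The delicate point is this last one. After the shift, $U$ is a product of shifted letters, and the degree bound must hold for that new product rather than the original one. To handle it, I would replace $\SS$ by $\widetilde\SS=\{x+\lambda_x1_\A\}$ from the start, and define $m$ via words over $\widetilde\SS$. I need to check that the hypothesis $m(\SS^*)\le m$ is what the argument uses, or else that it transfers under shifts. If it does not transfer, I would argue with the original $U$: apply the relation for $U$, then multiply on the right by inverses of the shifted $a_i$, expanding each shifted letter as $a_i+\lambda 1$. This only produces words of lower length, so the reducibility conclusion $\alpha\le m-1$ survives.

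With the lemma in hand, the remainder is verbatim. Put $l=l(\SS)$ and assume $l>k(m-1)$. Then $\LIW{l}$ avoids $(m-1)^+$-powers. Corollary \ref{c(w)} gives $c(\LIW{l})\ge (k+1)(l-k+1)$. Lemma \ref{s_W}, applied to the algebra $\mathcal{B}$ with finite $\SS\subseteq\mathcal{B}$, gives $c(\LIW{l})\le d$. Combining the two yields $l\le d/(k+1)+k-1$.

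The main obstacle I expect is the shift-invariance of the word-degree bound in the adapted Lemma \ref{lem:reduce_power}.
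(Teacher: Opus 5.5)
You are following the paper's route exactly: its proof is the single sentence ``argue as above with $m(\SS^*)$ and $\dim\L(\SS)$''. The obstacle you flag is the real one. Your fallback does not overcome it, and nothing can, because the $m(\SS^*)$-version of Lemma~\ref{lem:reduce_power} is false, and so is the corollary itself.

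The fallback fails for the following reason. The cancellation $U^{j}a_k^{-1}\cdots a_2^{-1}=U^{j-1}a_1$ needs a relation of degree at most $m$ for the product of the \emph{shifted, invertible} letters. Suppose you keep the relation for the unshifted $U$ and multiply by $\tilde a_k^{-1}\cdots\tilde a_2^{-1}$. Nothing telescopes, since $a_k\tilde a_k^{-1}=1-\mu_k\tilde a_k^{-1}$. The correction terms are words of length up to $km-1$ multiplied by products of the $\tilde a_i^{-1}\in\L_{m-1}(\SS)$. They lie in $\L_j(\SS)$ only for $j$ far above $k(m-1)$, so they are not ``words of lower length''. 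Moreover, $m(\SS^*)$ is genuinely not shift-invariant.

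Here is a counterexample. Take $|\F|>3$ and $\A=\F\langle a,b\rangle/I$, where $I$ is spanned by all words that are not factors of $ababa$; this is an ideal, since non-factors stay non-factors under extension.
\begin{itemize}
\item The ten factors $\varepsilon,a,b,ab,ba,aba,bab,abab,baba,ababa$ form a basis, so for $\SS=\{a,b\}$ you get $d=10$ and $l(\SS)=5$.
\item A nonempty factor $w$ has minimal polynomial $t^s$, where $s$ is the least exponent with $w^s$ not a factor. So $s=3$ for $ab$ and $ba$, $s=2$ otherwise, and $m(\SS^*)=3$.
\item For $k=2$ the corollary would give $l(\SS)\le\max\{4,\,10/3+1\}=13/3<5$, a contradiction.
\item The irreducible word $ababa=(ab)^{5/2}$ violates the adapted lemma.
\item For every invertible shift $\tilde a=a+\lambda 1_\A$, $\tilde b=b+\mu 1_\A$ (so $\lambda\mu\neq0$), one computes $(\tilde a\tilde b-\lambda\mu 1_\A)^5=\lambda^2\mu^3\,ababa\neq0$. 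Hence $\deg(\tilde a\tilde b)=6>3$, which is exactly the failure of shift-invariance.
\end{itemize}
The paper's one-line proof silently makes the same substitution, so the gap is in the paper as well.

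Two correct statements survive.
\begin{itemize}
\item Without any shift, $U^m\in\L_{k(m-1)}(\SS)$ for every $U\in\SS^k$, so irreducible words avoid $m$-powers. Corollary~\ref{c(w)} with exponent $m$, together with Lemma~\ref{s_W} applied in $\L(\SS)$, then gives $l(\SS)\le\max\{km,\ d/(k+1)+k-1\}$.
\item Theorem~\ref{thm:main} applied to the algebra $\L(\SS)$ gives the stated bound with $m(\L(\SS))$ in place of $m(\SS^*)$.
\end{itemize}
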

\begin{proof}
    We can argue exactly as above replacing $m(\A)$ and $\dim \A$ with $m(\SS^*)$ and $\dim \L(\SS)$, respectively.
\end{proof}

\section{Comparison of Theorem \ref{thm:main} with other bounds}\label{sec:compare}

In this section we compare Theorem \ref{thm:main} with the currently known bounds for the length given in Section \ref{sec:length} (below we assume that $\F$ is sufficiently large). Obviously, Proposition \ref{prop:triv} and Proposition \ref{prop:kh} are special cases of Theorem \ref{thm:main} with $k=0$ and $k=1$, respectively.

Applying Theorem \ref{thm:main} to $\A=M_n(\F)$ with $k=2$, we obtain $l(M_n(\F))\leq \max \{2n-2,n^2/3 + 1\}=n^2/3+1$. Since $l(M_n(\F)) \in \ZZ$, $l(M_n(\F))\leq \lfloor n^2/3+1 \rfloor$. However, $\lfloor n^2/3+1 \rfloor = \lceil(n^2+2)/3 \rceil$. Thus, Proposition \ref{prop:paz} immediately follows from Theorem \ref{thm:main} with $k=2$.

Proposition \ref{prop:pap} is the closest result to Theorem \ref{thm:main}. It has the same asymptotic behavior $O(\sqrt{dm})$. Let us show that the bound from Theorem \ref{thm:main} is sharper. Now we apply Theorem \ref{thm:main} (for $k=\lfloor\sqrt{d/m} \rfloor$) and obtain

$$l(\A)\leq \max\left\{\left\lfloor\sqrt{d/m}\right\rfloor(m-1),\frac{d}{\left\lfloor\sqrt{d/m}\right\rfloor+1}+\left\lfloor\sqrt{d/m}\right\rfloor-1\right\} <$$
$$\max\left\{\sqrt{d/m}(m-1),\frac{d}{\sqrt{d/m}}+\sqrt{d/m}-1\right\}=$$
$$\max\left\{\sqrt{dm}-\sqrt{d/m},\sqrt{dm}+\sqrt{d/m}-1\right\}=\sqrt{dm}+\sqrt{d/m}-1=\sqrt{dm}(1+1/m)-1.$$

On the other hand,

$$ m \sqrt{\dfrac{2d}{m-1} + \dfrac{1}{4}} + \dfrac{m}{2} - 2 =  \sqrt{dm}\sqrt{\dfrac{2m}{m-1} + \dfrac{m}{4d}} + \dfrac{m}{2} - 2>\sqrt{dm}\sqrt{\dfrac{2m}{m-1}}-1.$$

Since $2m/(m-1)>(1+1/m)^2$,  Proposition \ref{prop:pap} immediately follows from Theorem \ref{thm:main} with $k=\lfloor\sqrt{d/m}\rfloor$.

Note that in some special cases significantly sharper bounds are known than those obtained in Theorem \ref{thm:main}. For example, in the case of commutative algebras (see \cite{Mar} and \cite{pgr}). However, Theorem \ref{thm:main} appears to be the best known result in general case.

\noindent {\bf Funding} 

The work is supported by the HSE University Basic Research Program.

\end{document}